\documentclass{article}[twosided]
\usepackage{comment}
\usepackage{graphicx}  
\usepackage{float}      
\usepackage{caption}
\usepackage{soul}

\newif\ifaistats
\newif\ifarxiv
\newif\ifarxivtwocolumns
\newif\ifneurips
\newif\ificml
\newif\ifaistatsnot
\newif\ifarxivnot
\newif\ifarxivtwocolumnsnot
\newif\ifneuripsnot
\newif\ificmlnot

\newif\ifextension
\newif\ifextensionnot
\extensionfalse
\extensionnottrue
\newif\ifhidden
\newif\ifhiddennot
\hiddenfalse
\hiddennottrue

\aistatsfalse
\arxivfalse
\arxivtwocolumnsnotfalse
\neuripsfalse
\icmlfalse
\aistatsnottrue
\arxivnottrue
\neuripsnottrue
\icmlnottrue

\ifaistats \aistatsnotfalse \fi
\ifarxiv \arxivnotfalse \fi
\ifarxivtwocolumnsnot \ifarxivtwocolumnsnotfalse \fi
\ifneurips \neuripsnotfalse \fi
\ificml \icmlnotfalse \fi
\ifextension \extensionnotfalse \fi
\ifhidden \hiddennotfalse \fi
\arxivtrue

\newcommand{\mbzuai}{\texttt{MBZUAI}}
\newcommand{\mytitle}{Loss-Transformation Invariance in the Damped Newton Method}
\newcommand{\weauthors}{\makecell{Alexander Shestakov \\ \mbzuai{}\footremember{MBZUAI} {Mohammed bin Zayed University of Artificial Inteligence. Email: firstname.lastname@mbzuai.ac.ae}}
\and \, \makecell{Sushil Bohara \\ \mbzuai{}\footrecall{MBZUAI}}
\and \, \makecell{Samuel Horv\'ath \\ \mbzuai{}\footrecall{MBZUAI}} 
\and \, \makecell{Martin Tak\'a\v{c} \\ \mbzuai{}\footrecall{MBZUAI}}
\and \, \makecell{Slavom\'ir Hanzely \\ \mbzuai{}\footrecall{MBZUAI}}
}

\ifaistats \aistatsnotfalse \fi
\ifarxiv \arxivnotfalse \fi
\ifarxivtwocolumns \arxivtwocolumnsnotfalse \fi
\ifneurips \neuripsnotfalse \fi
\ificml \icmlnotfalse \fi
\ifextension \extensionnotfalse \fi
\ifhidden \hiddennotfalse \fi

\usepackage[utf8]{inputenc} % allow utf-8 input
\usepackage[T1]{fontenc}    % use 8-bit T1 fonts
\usepackage{hyperref}       % hyperlinks
\usepackage{url}            % simple URL typesetting
\usepackage{booktabs}       % professional-quality tables
\usepackage{amsfonts}       % blackboard math symbols
\usepackage{nicefrac}       % compact symbols for 1/2, etc.
\usepackage{microtype}      % microtypography
\usepackage{xcolor}         % colors
\usepackage{wrapfig}
\ifaistatsnot
    \usepackage{natbib}
\fi
\usepackage{xargs}                      % Use more than one optional parameter in a new commands
\usepackage{comment}

% set page geometry

\usepackage{hyperref}
\usepackage{url, times}

\usepackage{amssymb,amsmath,amscd,amsfonts,amstext,amsthm,bbm, enumerate, dsfont, mathtools}
\usepackage{array}
\usepackage{multicol}
\usepackage{graphicx}
\usepackage{varwidth}
\usepackage{hyperref}
\usepackage{import}
\usepackage{caption,subcaption}
\ificmlnot
    \usepackage{algorithm}
    \usepackage{algpseudocode}
\fi
\usepackage[none]{hyphenat}
\usepackage{footnote}
\usepackage{cleveref}
\usepackage{makecell}
\usepackage{threeparttable}
\usepackage{booktabs}
\usepackage{tcolorbox}
\usepackage{caption}
\usepackage{subcaption}

\usepackage{scrextend}

\usepackage{tablefootnote}
\usepackage{changepage}
\usepackage{stackengine}
\usepackage{pifont}

\usepackage{mdframed}

\graphicspath{ {images/} }

\newtheorem{theorem}{Theorem}

\newtheorem{lemma}{Lemma}
\newtheorem{corollary}{Corollary}

\newtheorem{definition}{Definition}

\DeclareMathOperator*{\argmin}{argmin}
\DeclareMathOperator*{\argmax}{argmax}

\newcommand{\R}{\mathbb{R}}

\newcommand{\eqdef}{\stackrel{\text{def}}{=}}

\mdfdefinestyle{definition}{
	backgroundcolor=lightgray!20,
    hidealllines=true,
}
\mdfdefinestyle{theorem}{
    linecolor=blue!50,
 %    linecolor=white,
	% backgroundcolor=blue!3,
}
\mdfdefinestyle{proof}{
    linecolor=orange,
}
\mdfdefinestyle{proposition}{
    backgroundcolor=orange!5,
    hidealllines=true,
}
\mdfdefinestyle{corollary}{
    linecolor=blue!50,
}
\mdfdefinestyle{lemma}{
    linecolor=blue!50,
}

% put the new mdframed style around the proof environment:
\surroundwithmdframed[style=definition]{definition}
\surroundwithmdframed[style=definition]{defalign}
\surroundwithmdframed[style=theorem]{theorem}
\surroundwithmdframed[style=proof]{proof}
\surroundwithmdframed[style=proposition]{proposition}
% \surroundwithmdframed[style=corollary]{corollary}
% \surroundwithmdframed[style=lemma]{lemma}

\newcolumntype{?}{!{\vrule width 1pt}}
% \definecolor{maroon}{cmyk}{0,0.87,0.68,0.32}
% \usepackage[table,dvipsnames]{xcolor}
\usepackage{colortbl}

\definecolor{mydarkgreen}{RGB}{39,130,67}

\definecolor{mydarkred}{RGB}{192,47,25}

\newcommand{\xopt}{x^*}

\newcommand{\pof}[1]{Proof of #1.}

\newcommand{\h}{\nabla^2 f}
\newcommand{\g}{\nabla f}

\newcommand{\mI}{\mathbf I}
\newcommand{\mH}{\mathbf H}

\newcommand{\normM}[2]{{\left \| #1 \right\|}_{#2}}

\newcommand{\normsMd}[2]{{\left \| #1 \right\|}_{#2}^{*2}}

\usepackage{theoremref}

\usepackage[colorinlistoftodos,bordercolor=orange,backgroundcolor=orange!20,linecolor=orange,textsize=scriptsize]{todonotes}

\usepackage{enumitem}
% \newlist{myitemize}{itemize}{3}
% \setlist[myitemize,1]{label=\textbullet,leftmargin=0in}

\newcommand{\lr}{\left(} % round
\newcommand{\rr}{\right)}
\newcommand{\ls}{\left[} % square
\newcommand{\rs}{\right]}
 % curly

\newcommand{\la}{\left\langle} % angle
\newcommand{\ra}{\right\rangle}

\newcommand{\footremember}[2]{%
    \footnote{#2}
    \newcounter{#1}
    \setcounter{#1}{\value{footnote}}%
}
\newcommand{\footrecall}[1]{%
    \footnotemark[\value{#1}]%
}

\newtheorem{claim}{Claim}
\newtheorem{example}{Example}
\usepackage{pifont}

\ifaistats
    \usepackage{\string~"./styles/aistats2024"}
 
    % If you use natbib package, activate the following three lines:
    \usepackage[round]{natbib}

\fi

\ifarxiv
    \usepackage[verbose=true,letterpaper]{geometry}
    \AtBeginDocument{
    \newgeometry{
        textheight=9in,
        textwidth=5.5in,
        top=1in,
        headheight=12pt,
        headsep=25pt,
        footskip=30pt
    }
    }

    \widowpenalty=10000
    \clubpenalty=10000
    \flushbottom
    \sloppy

    \author{\weauthors}
    \date{}
    \title{\mytitle}
\fi

\ifarxivtwocolumns
    \usepackage{\string~"./styles/arxivtwocolumns"}
    \author{\weauthors}
    \date{}
    \title{\mytitle}
\fi

\ifneurips
    \usepackage{\string~"./styles/neurips_2023"}
    \author{\weauthors}
    \date{}
    \title{\mytitle}
\fi

\ificml
    \usepackage{\string~"./styles/icml2023"}
    % \icmltitlerunning{Newton Method with Global $\mathcal O \left( k^{-2} \right)$ Rate and Low-Rank Updates}
    \author{\weauthors}
    \date{}
    \title{\mytitle}
    % ==========================
    
    % THIS CONTAINS WHOLE SHROT SUBMISSION INCLUDING END OF DOCUMENT
    % ==========================
    \ificml

    \fi
\fi

\begin{document}
\ifaistats
        %\runningtitle{I use this title instead because the last one was very long}
        %\runningauthor{Surname 1, Surname 2, Surname 3, ...., Surname n}
        
        \twocolumn[
        \aistatstitle{\mytitle}
        \aistatsauthor{ Author 1 \And Author 2 \And  Author 3 }
        \aistatsaddress{ Institution 1 \And  Institution 2 \And Institution 3 } ]
\fi
\ifaistatsnot
	   \maketitle
\fi

\newcommand{\mA}{\mathbf A}
\newcommand{\xnext}{x^+}
\newcommand{\hi}{\ls \h(x) \rs^{-1}}
\newcommand{\loss}{L}
\newcommand{\gloss}{\nabla \loss}
\newcommand{\hloss}{\nabla^2 \loss}
\newcommand{\trans}{\phi}
\newcommand{\gtrans}{\trans'}
\newcommand{\htrans}{\trans''}
\newcommand{\atrans}{\alpha_\trans}
\newcommand{\gr}{g}

\newcommand{\ltrans}{\lambda_\phi}
\newcommand{\gtx}{\gtrans_x}
\newcommand{\htx}{\htrans_x}

\newcommand{\Int}{\text{Interior}}
\newcommand{\Bd}{\text{Boundary}}
\newcommand{\Epi}{\text{Epigraph}}

\def\<#1,#2>{\left\langle #1,#2 \right\rangle}

\newcommand{\slavo}[1]{\todo[inline]{{\textbf{Slavo:} \emph{#1}}}}
\newcommand{\aleksander}[1]{\todo[inline]{{\textbf{Aleksander:} \emph{#1}}}}
\newcommand{\sushil}[1] {\todo[inline]{{\textbf{Sushil:} \emph{#1}}}}
\newcommand{\martin}[1]{\todo[inline]{{\textbf{Martin:} \emph{#1}}}}
\newcommand{\samo}[1]{\todo[inline]{{\textbf{Samuel:} \emph{#1}}}}

\begin{abstract}
The Newton method is a powerful optimization algorithm, valued for its rapid local convergence and elegant geometric properties. However, its theoretical guarantees are usually limited to convex problems. In this work, we ask whether convexity is truly necessary. We introduce the concept of loss-transformation invariance, showing that damped Newton methods are unaffected by monotone transformations of the loss—apart from a simple rescaling of the step size. This insight allows difficult losses to be replaced with easier transformed versions, enabling convexification of many nonconvex problems while preserving the same sequence of iterates. Our analysis also explains the effectiveness of unconventional stepsizes in Newton’s method, including values greater than one and even negative steps.
\end{abstract}

\section{Introduction}
% \subsection{Newton Method}
The Newton method is one of the most fundamental algorithms in optimization. Its origins trace back to the works of \citet{Newton} and \citet{Raphson}, and its numerous variants have found applications across mathematics, statistics, and machine learning. For example, the survey of trust-region and quasi-Newton methods by \citet{conn2000trust} cites over a thousand related papers. The method seeks a minimizer of a smooth twice differentiable function $f:\R^d \to \R$ via the update rule
\begin{equation}
x^{k+1}=x^k-\ls \h(x^k)\rs^{-1}\g(x^k).
\end{equation}
It is widely appreciated for its extremely fast local convergence and for its affine invariance, which ensures robustness to scaling and changes of coordinate basis. 

Despite its rich history, even the most basic variants of the Newton methods -- stepsize schedules -- are still being researched to this day.
\citet{nesterov1994interior} proposed one of the first stepsize schedules with global convergence guarantees. More recently, \citet{hanzely2022damped} established a duality between Newton stepsize schedules and Levenberg–Marquardt regularization, achieving a global $\mathcal{O}(1/k^2)$ rate. \citet{hanzely2024newton} extended this analysis under assumptions similar to third-order tensor methods, obtaining a simple schedule with $\mathcal{O}(1/k^3)$ convergence.

When the loss function has unknown parameterization, explicit stepsize schedules are often replaced by linesearch procedures, including greedy rules or backtracking based on standard descent conditions such as Frank–Wolfe, strong Frank–Wolfe, Armijo, or Goldstein conditions. For Newton direction, these procedures typically select stepsizes from the range $(0,1]$ \citep{nocedal2006numerical}.  

All of these methods, however, fundamentally rely on convexity. In fact, without convexity the direction $-\ls \h(x^k)\rs^{-1}\g(x^k)$ is not necessary a descent direction, and the stepsized Newton methods can diverge, both in theory and in practice.
Moreover, explicit stepsize rules \citep{nesterov1994interior, hanzely2022damped, hanzely2024newton} rely on the \emph{Newton decrement} $\la \g(x^k), \ls \h(x^k)\rs^{-1}\g(x^k) \ra$, which is not necessary positive outside convexity. 
One more nail to the coffin of nonconvex analysis is viewpoint that the Newton linesearches can be interpreted as Hessian-regularized updates \citep{hanzely2024newton}:
\begin{equation}
    x^{k+1}=x^k-\ls \h(x^k)+\lambda_k\mI \rs^{-1}\g(x^k),
\end{equation}
with $\lambda_k \propto \|x^{k+1}-x^k\|^\beta$ or $\lambda_k \propto \|\nabla f(x^k)\|^\beta$. In convex settings, this regularization admits a natural interpretation, since $\lambda_k$ can be related to the Newton decrement, yielding an equivalent update along the Newton direction. However, for nonconvex functions, the analogous regularization based on Newton decrement would be negative, making the analogy invalid. 
This raises a question:  
\begin{quote}
\centering
\emph{Is convexity truly a necessary condition for the fast convergence of Newton’s method, or can one guarantee such convergence in the nonconvex regime?}
\end{quote}
In this work we show that the standard form of convexity is not required for the stepsized Newton method to converge. We prove that Newton’s stepsize schedules are \emph{invariant under loss transformations}, enabling convexification of objectives without altering the iterates themselves. This new property, which we call \emph{transformation invariance}, extends the known geometric invariances of Newton’s method.

\subsection{Summary of the Contributions}

\begin{enumerate}%[leftmargin=*]
\item \textbf{Conceptual advance:} We introduce the notion of \emph{transformation invariance} and prove that the stepsized Newton method enjoys this property (\Cref{th:transformation_to_stepsize}). In contrast, closely related Hessian regularization techniques do not (\Cref{le:regularization_not_loss_invariant}).

\item \textbf{Theoretical consequences:} Transformation invariance enables convexification (\Cref{th:convexification_general}) and star-convexification (\Cref{th:star_convex}) of pseudoconvex losses. We provide sufficient and necessary conditions for such transformations to exist (\Cref{th:convexification_general}).

\item \textbf{Practical consequences:} We propose a \emph{transformation-induced stepsize schedule} (\Cref{col:induced_stepsize}), which transfers the iterate sequence of the Newton method applied to a transformed loss back to the original objective.

\item \textbf{Explaining non-standard stepsizes:} Our framework provides a principled explanation for the effectiveness of unconventional stepsize ranges in Newton’s method, including stepsizes larger than one (\Cref{ssec:example_poly}, \Cref{ssec:example_polytope}) and even negative stepsizes (Figures~\ref{fig:flip_poly}, \ref{fig:flip_log}).
\end{enumerate}

\subsection{Preliminaries}
We consider the unconstrained minimization problem 
\begin{align}
\min_{x\in \R^d} f(x),
\end{align}
where $f:\R^d \to \R$ is a twice-differentiable, nonconvex function, and $\xopt \eqdef \argmin_{x \in \R^d} f(x)$ denotes its global minimizer. We denote $\|\cdot\|$ for the standard Euclidean norm, and we will also be using norms induced by symmetric positive semidefinite matrices $\mathbf B \in \R^{d \times d}$ and their pseudoinverses,
\begin{eqnarray*}
\|h\|_{\textbf{B}}^2 \eqdef \<h,\textbf{B}h>,
\qquad \|g\|_{\textbf{B}}^{*2} = \<h,\textbf{B}^{\dag}h>,
\qquad \forall h,g\in \R^d.
\end{eqnarray*}
In particular, we frequently use local Hessian norms, $\mathbf B = \nabla^2 f(x)$, for which we use the shorthand
% We will often use a local Hessian-induced norms with $\mathbf B = \h(x)$, with a shorthand notation
\begin{eqnarray*}
\|h\|_{x}^2\eqdef \<h,\nabla^2 f(x)h>,
\qquad \|g\|_x^{*2} \eqdef \<g,\nabla^2f(x)^{\dag}g>.
\end{eqnarray*}

Throughout the paper, we analyze properties of convex and pseudoconvex functions.
\begin{definition}[Convexity]
Function $f:\R^d \rightarrow \R$ is called convex, if
\[f(y) \geq f(x) + \<\nabla f(x),y-x>,~~~\forall x,y \in\R^d.\]
\end{definition}
We relax the standard convexity assumption to the following notion of pseudoconvexity.
\begin{definition}[Pseudoconvexity]\label{def:pseudo}
Function $f:\R^d \rightarrow \R$ is called pseudoconvex, if
\[f(y) < f(x) \Rightarrow \<\nabla f(x), y-x> < 0,~~~\forall x,y \in \R^d.\]
Additionally, $f$ is called strictly pseudoconvex, if
\[f(y) \leq f(x)\Rightarrow \<\nabla f(x), y-x> < 0,~~~\forall x\neq y \in \R^d.\]
\end{definition}
This formulation highlights the close connection between convexity and pseudoconvexity. For our analysis, we rely on an equivalent characterization from \citet{avriel1978second}.
\begin{lemma}\label{le:pseudoconvex}
Function $f$ is (strictly) pseudoconvex if and only if the following conditions hold:
\begin{enumerate}
\item $v^T\nabla f(x) = 0 \Rightarrow v^T\nabla^2 f(x)v \geq 0$,
\item If $\nabla f(x) = 0$, then $x$ is a (strict) global minimum.
\end{enumerate}
\end{lemma}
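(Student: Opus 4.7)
The statement is an ``iff'', so the plan is to prove each direction separately, with the main effort concentrated in the reverse implication.

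\textbf{Forward direction ($f$ (strictly) pseudoconvex $\Rightarrow$ (1), (2)).} Condition (2) is immediate from the contrapositive of \Cref{def:pseudo}: if $\nabla f(x) = 0$, then $\<\nabla f(x), y - x> = 0$ for every $y$, so pseudoconvexity yields $f(y) \geq f(x)$ for all $y$ (with strict inequality for $y \neq x$ in the strictly pseudoconvex case, using the strict version of the definition). For condition (1), take any $x, v$ with $v^T \nabla f(x) = 0$ and assume for contradiction that $v^T \nabla^2 f(x) v < 0$. A Taylor expansion along $x + tv$ gives $f(x + tv) = f(x) + \tfrac{t^2}{2} v^T \nabla^2 f(x) v + o(t^2) < f(x)$ for all sufficiently small $t \neq 0$, while $\<\nabla f(x), tv> = 0$, directly contradicting pseudoconvexity.

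\textbf{Reverse direction ((1), (2) $\Rightarrow$ pseudoconvexity).} Fix $x, y$ with $f(y) < f(x)$; I must show $\<\nabla f(x), y - x> < 0$. Condition (2) disposes of the case $\nabla f(x) = 0$, since then $x$ would be a global minimum, contradicting $f(y) < f(x)$. For $\nabla f(x) \neq 0$, assume for contradiction that $\<\nabla f(x), y - x> \geq 0$ and study $\phi(t) \eqdef f(x + t(y - x))$ on $[0, 1]$, so that $\phi(0) > \phi(1)$ and $\phi'(0) \geq 0$. The plan is to locate a critical point of $\phi$ by continuity: let $t^\star$ be the largest $t \in [0, 1)$ at which $\phi'(t^\star) = 0$ and $\phi'$ is negative on $(t^\star, t^\star + \epsilon)$ for some $\epsilon > 0$. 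Such $t^\star$ exists because $\phi'$ must pass from nonnegative at $0$ to a negative mean-value slope somewhere before $1$. At $z^\star = x + t^\star(y - x)$ the directional derivative $\<\nabla f(z^\star), y - x>$ vanishes, so condition (1) applied with $v = y - x$ gives $\phi''(t^\star) \geq 0$. This contradicts $\phi'$ decreasing through zero at $t^\star$ unless $\phi''(t^\star) = 0$, in which case I plan to prolong the argument: either $\nabla f(z^\star) = 0$, so (2) forces $z^\star$ to be a global minimum and $f(y) < f(x) \leq f(z^\star)$ already gives the contradiction; or $\nabla f(z^\star) \neq 0$, in which case the continued descent of $\phi$ past $t^\star$ is propagated until a point along the segment where $\nabla f$ vanishes, at which (2) again closes the argument.

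\textbf{Main obstacle.} The crux lies in the reverse direction, specifically in ruling out the degenerate case $\phi'(t^\star) = \phi''(t^\star) = 0$. Condition (1) alone is a purely pointwise second-order statement and does not preclude this degeneracy, so the argument must exploit the global reach of condition (2): any flatness of $\phi$ at a critical point along $[x, y]$ must eventually lead to a point where the full gradient $\nabla f$ vanishes, turning condition (2) into the decisive tool. The strictly pseudoconvex case requires one further wrinkle, namely handling $f(y) = f(x)$, where the strict version of (2) is used to forbid any non-minimal critical point from appearing along the segment.
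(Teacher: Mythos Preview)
The paper does not supply its own proof of \Cref{le:pseudoconvex}; it is quoted as a known characterization from Avriel (1978). So there is no in-paper argument to compare against, and your attempt must stand on its own.

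Your forward direction is correct and standard: condition (2) is the contrapositive of \Cref{def:pseudo}, and condition (1) follows from a second-order Taylor expansion along $v$.

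The reverse direction has a genuine gap --- precisely the one you flag as the ``main obstacle.'' Your plan hinges on the claim that if the one-dimensional restriction $\phi(t)=f(x+t(y-x))$ satisfies $\phi'(t^\star)=\phi''(t^\star)=0$ yet keeps decreasing past $t^\star$, then the \emph{full} gradient $\nabla f$ must eventually vanish somewhere on the segment $[x,y]$, at which point condition (2) kicks in. But $\phi'(t)=0$ only says that the component of $\nabla f$ in the direction $y-x$ vanishes; the orthogonal components of $\nabla f$ are entirely unconstrained by your analysis of $\phi$, and nothing you have written forces them to zero. Condition (1) is silent about directions \emph{not} orthogonal to $\nabla f$, and condition (2) only activates when the \emph{whole} gradient vanishes. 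The ``propagation'' step is therefore not an argument as written.

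This is not a cosmetic issue. The sufficiency side of second-order characterizations of pseudoconvexity is known to be delicate: standard treatments (e.g.\ Crouzeix--Ferland) supplement the bordered-Hessian condition (1) with an extra hypothesis at the degenerate points where $v^\top\nabla^2 f(x)v=0$ for some $v\neq 0$ with $v^\top\nabla f(x)=0$ --- typically that $t\mapsto f(x+tv)$ attain a local minimum at $t=0$. Whether condition (2) as formulated here can serve as that supplement is exactly what a proof of the reverse implication must establish, and your sketch does not do so. You should consult the cited Avriel paper to see how sufficiency is actually handled there, or try a two-step route: first derive quasiconvexity from (1) (itself nontrivial in the degenerate case), then upgrade to pseudoconvexity using (2).
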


\section{Loss Transformations}
In this section, we analyze loss transformations. Consider a mapping $\phi:\R \to \R$ and the composite function $L \eqdef \phi \circ f$. Instead of minimizing the original, difficult nonconvex objective $f$, we may seek a transformation $\phi$\footnote{The transformation $\phi$ must be monotonically increasing on $[f(\xopt),\infty)$ to preserve the minimizer, i.e., $\argmin_{x \in \R^d} L(x) = \xopt$.} such that the transformed loss $L = \phi \circ f$ is convex and easier to optimize.
By the chain rule, the gradient and Hessian of $L$ are
\begin{align}
    \gloss (x) &= \gtrans(f(x)) \g(x),\label{eq:l_grad}\\
    \hloss (x) &= \gtrans (f(x)) \h(x) + \htrans(f(x)) \g(x) \g(x)^\top. \label{eq:l_hess}
\end{align}
Rearranging, we observe that $\hloss (x) \propto \h(x) + \frac {\htrans(f(x))}{\gtrans (f(x))} \g(x) \g(x)^\top,$ which suggests that $\loss$ can became convex if $\trans$ is chosen appropriately.
A natural candidate is the exponential transformation $\phi(f(x)) = \exp(a \cdot f(x))$, in which case the Hessian simplifies to $\hloss (x) \propto \h(x) + a \g(x) \g(x)^\top$.\\

Thus, a carefully selected transformation can substantially improve the optimization landscape. Once convexified, the transformed objective $L$ can be efficiently minimized using the Newton method,
\begin{align}
    \xnext
    = x-\atrans \ls \hloss (x) \rs^{-1} \gloss (x).
\end{align}

\subsection{Equivalency of the Newton's method}

Surprisingly, applying the stepsized Newton method to the original loss $f$ or to its transformed version $L = \phi \circ f$ yields almost identical behavior. The descent directions coincide, and the only difference lies in the stepsize. This observation is formalized in the following theorem.
\begin{theorem}\label{th:transformation_to_stepsize}
Let $\alpha(x)$ be an arbitrary Newton stepsize schedule for the original loss $f(x)$. If $\nabla f(x) \in \text{Range}\left(\nabla^2 f(x)\right)$, then the Newton method on the transformed loss $L(x)$ with stepsize
\begin{align}
\atrans (x) \eqdef \alpha (x) \lr 1+ \frac {\ \htrans(f(x))} {\gtrans(f(x))} \normsMd {\g(x)} x \rr
\end{align}
produces the same sequence of iterates as the Newton method on $f(x)$ with stepsize $\alpha(x)$, i.e.,
\begin{align}
    x-\alpha(x) \ls \h(x)\rs^\dag \g(x)
    = x-\atrans(x) \ls \hloss (x) \rs^{\dag} \gloss (x).
\end{align}
We refer to this property as the \emph{transformation invariance} of the stepsized Newton method. The multiplicative term $1+ \frac {\ \htrans(f(x))} {\gtrans(f(x))} \normsMd {\g(x)} x$ is called the \emph{scaling factor}. 
\end{theorem}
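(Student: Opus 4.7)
The plan is to reduce the theorem to an algebraic identity about the direction $[\nabla^2 L(x)]^\dagger \nabla L(x)$, showing it is a scalar multiple of $[\nabla^2 f(x)]^\dagger \nabla f(x)$. I would introduce compact notation $H \eqdef \nabla^2 f(x)$, $g \eqdef \nabla f(x)$, $a \eqdef \gtrans(f(x))$, $b \eqdef \htrans(f(x))$, so that equations \eqref{eq:l_grad} and \eqref{eq:l_hess} become $\nabla L = a g$ and $\nabla^2 L = aH + b g g^\top$. The target is to prove
\begin{equation*}
[aH + b gg^\top]^\dagger (ag) \;=\; \frac{1}{1 + (b/a)\, g^\top H^\dagger g}\, H^\dagger g,
\end{equation*}
since multiplying this identity by $\atrans(x)$ and recalling the definition of $\atrans(x)$ yields exactly $\alpha(x) H^\dagger g$, which is the original Newton update.

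The main tool is the Sherman--Morrison formula applied to the symmetric rank-one perturbation of $aH$. This is the step that forces the hypothesis $g\in\text{Range}(H)$: the pseudoinverse version of Sherman--Morrison holds in the standard form only when the update vector lies in the range of the base matrix, which here gives $g\in\text{Range}(H)=\text{Range}(aH)$ (using $a\neq 0$, which is implicit in $\phi$ being a valid transformation). Under this range condition I would apply the formula
\begin{equation*}
(aH + b gg^\top)^\dagger \;=\; \tfrac{1}{a}H^\dagger \;-\; \frac{(b/a^2)\, H^\dagger g\, g^\top H^\dagger}{1 + (b/a)\, g^\top H^\dagger g},
\end{equation*}
and then right-multiply by $ag$ and simplify, factoring out $H^\dagger g$ and combining the two resulting scalar terms over a common denominator $1+(b/a)\|g\|_x^{*2}$.

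With the direction identity in hand, the rest is bookkeeping: the Newton iterate for $L$ with stepsize $\atrans(x)$ equals
\begin{equation*}
x - \atrans(x)\,[\hloss(x)]^\dagger \gloss(x) \;=\; x - \alpha(x)\bigl(1+\tfrac{b}{a}\|g\|_x^{*2}\bigr)\cdot \frac{H^\dagger g}{1+\tfrac{b}{a}\|g\|_x^{*2}} \;=\; x-\alpha(x) H^\dagger g,
\end{equation*}
which is the original Newton iterate for $f$. The main obstacle is justifying the pseudoinverse Sherman--Morrison step cleanly, in particular verifying that the rank-one correction $b gg^\top$ does not enlarge the range (which again uses $g\in\text{Range}(H)$) and that the scalar denominator $1+(b/a)\|g\|_x^{*2}$ does not vanish; the latter is guaranteed on the convexifying regime $1+(b/a)\|g\|_x^{*2}>0$ corresponding to $\hloss(x)\succeq 0$ on $\text{Range}(H)$, and degenerate cases can be addressed separately by noting $g=0$ makes both sides trivially equal.
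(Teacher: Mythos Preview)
Your proposal is correct and follows essentially the same route as the paper: both reduce the claim to the algebraic identity that $[\hloss(x)]^{\dagger}\gloss(x)$ is the scalar $\bigl(1+\tfrac{\htrans}{\gtrans}\|g\|_x^{*2}\bigr)^{-1}$ times $H^{\dagger}g$, after which the stepsize bookkeeping is immediate. The only difference is the algebraic device: you invoke the pseudoinverse Sherman--Morrison formula for the rank-one update $aH+bgg^\top$, whereas the paper factors $\hloss$ as $(\htrans\,gg^\top H^{\dagger}+\gtrans I)\,H$ and observes directly that $g$ is an eigenvector of the first factor with eigenvalue $\htrans\|g\|_x^{*2}+\gtrans$, which lets them read off the scalar without writing down the full pseudoinverse. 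Your route makes the role of the range hypothesis and the nondegeneracy of the denominator more explicit; the paper's eigenvector shortcut is slightly more elementary but glosses over the pseudoinverse product identity it implicitly uses.
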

Therefore, if we have a stepsize schedule $\atrans$ for the Newton method minimizing a nice function $L$, we can induce a schedule for the difficult nonconvex function $f$ with matching sequences of iterate.
\begin{corollary} \label{col:induced_stepsize}
    A Newton method on the original function $f$ with stepsize schedule $\alpha(x)\eqdef \atrans(x) \lr 1+ \frac {\ \htrans(f(x))} {\gtrans(f(x))} \normsMd {\g(x)} x \rr^{-1}$ produces the identical sequence of iterates to the Newton method on the transformed loss $L$ with stepsize schedule $\atrans(x)$.
\end{corollary}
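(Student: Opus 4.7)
The plan is to deduce Corollary~\ref{col:induced_stepsize} directly from Theorem~\ref{th:transformation_to_stepsize} by inverting the correspondence between the two stepsize schedules. Theorem~\ref{th:transformation_to_stepsize} takes an arbitrary schedule $\alpha(x)$ for the Newton method on $f$ and produces a matching schedule $\atrans(x)$ for the Newton method on $L$; the corollary asks for the opposite direction, namely to start from $\atrans(x)$ and recover a suitable $\alpha(x)$.

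Concretely, I would name the scaling factor
\[
s(x) \eqdef 1 + \frac{\htrans(f(x))}{\gtrans(f(x))}\,\normsMd{\g(x)}{x},
\]
and, for any $x$ at which $s(x)\neq 0$, define $\alpha(x) \eqdef \atrans(x)\,s(x)^{-1}$, which is exactly the schedule appearing in the corollary. By construction this pair satisfies $\atrans(x) = \alpha(x)\,s(x)$, so Theorem~\ref{th:transformation_to_stepsize} (under its standing assumption $\g(x)\in \text{Range}(\h(x))$ at each iterate) immediately yields
\[
x-\alpha(x)\ls \h(x)\rs^\dag \g(x) \;=\; x-\atrans(x)\ls \hloss(x)\rs^\dag \gloss(x).
\]
Applying this identity inductively from any common starting point $x^0$ propagates the equality of single-step updates to equality of the full iterate sequences, which is exactly the claim.

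The only delicate point is the invertibility of $s(x)$: one needs $s(x)\neq 0$ along the trajectory for the rearrangement $\alpha(x) = \atrans(x)/s(x)$ to be meaningful. In the regimes of interest for convexification, $\phi$ is monotone increasing and convex on the relevant range, so $\gtrans,\htrans \geq 0$ and hence $s(x)\geq 1$ automatically; more generally, nonvanishing of $s$ is implicit in writing down the inverse schedule and requires no separate argument. I therefore do not anticipate any nontrivial obstacle — the corollary is essentially a bookkeeping reformulation of Theorem~\ref{th:transformation_to_stepsize} viewed from the opposite direction.
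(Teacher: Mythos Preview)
Your proposal is correct and matches the paper's treatment: the corollary is stated immediately after Theorem~\ref{th:transformation_to_stepsize} with no separate proof, as it follows directly by inverting the stepsize correspondence $\atrans(x)=\alpha(x)\,s(x)$. Your observation about needing $s(x)\neq 0$ along the trajectory is a valid caveat that the paper leaves implicit in the statement of the corollary itself.
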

We illustrate the importance of this result in \Cref{fig:newton-trajectories_1D} (see also \Cref{ssec:transformation-induced_stpesize} for details).  
\begin{figure}[tbh]  
  \centering
  \includegraphics[width=\linewidth]{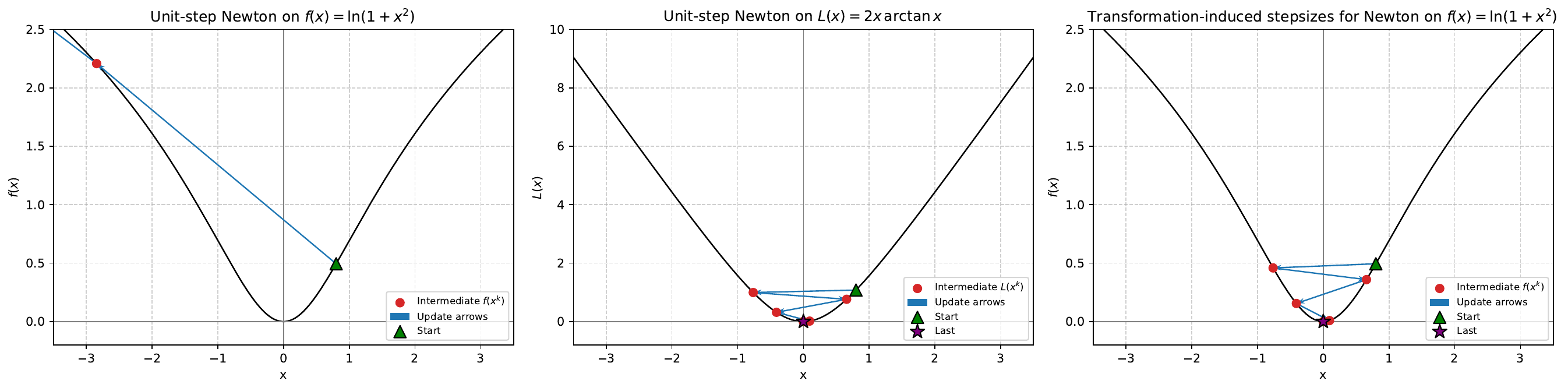}
  \caption{\textbf{Toy example: effect of transformation-induced stepsizes.}  
  Loss $f(x) = \ln(1+x^2)$ initialized at $x_0 = 0.8$.  
  Left: the classical Newton method diverges.  
  Middle: Newton method on the surrogate $L(x) = 2x \arctan(x)$ converges to the minimizer.  
  Right: Newton method on the original loss with a transformation-induced stepsize schedule also converges.}
  \label{fig:newton-trajectories_1D}
\end{figure}

Despite the known connection between Newton stepsizes and Levenberg--Marquardt Hessian regularization \citep{hanzely2022damped}, transformation invariance does not extend to the standard Levenberg--Marquardt scheme. This limitation is formalized below.

\begin{lemma}\label{le:regularization_not_loss_invariant}
Consider the sequence of iterates generated by the Newton method with a Levenberg--Marquardt regularization schedule $\lambda(x)$ applied to $f(x)$. 
There exists \textbf{no} regularization schedule $\lambda_{\phi}(x,\phi(\cdot), \lambda(x))$ for $\phi(f(x))$ producing the identical sequence of iterates.
\end{lemma}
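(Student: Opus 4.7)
The approach is to write out the two update directions explicitly and show that the space of directions reachable by varying $\lambda_\phi$ in the transformed problem is (generically) only a one-parameter curve in $\R^d$ that does not pass through the target vector dictated by $\lambda(x)$. I will then exhibit one state $x$ where this failure is concrete, which suffices because the claim is the non-existence of a single universal schedule.

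First I fix notation at an arbitrary iterate and write $H=\h(x)$, $g=\g(x)$, $a=\gtrans(f(x))$, $b=\htrans(f(x))$. Using \eqref{eq:l_grad}--\eqref{eq:l_hess}, the two Levenberg--Marquardt updates to be matched are
\begin{align*}
\ls H+\lambda \mI\rs^{-1} g
\;\stackrel{!}{=}\;
a\ls aH+b\,gg^\top+\ltrans \mI\rs^{-1} g .
\end{align*}
A Sherman--Morrison calculation applied to $A=aH+\ltrans \mI$ with rank-one update $b\,gg^\top$ gives
\begin{align*}
a\ls aH+b\,gg^\top+\ltrans \mI\rs^{-1} g
\;=\;
\frac{\lr H+(\ltrans/a)\mI\rr^{-1} g}{1+(b/a)\,g^\top\lr H+(\ltrans/a)\mI\rr^{-1} g}.
\end{align*}
So by varying $\ltrans$ the transformed LM update traces out a scalar multiple of the one-parameter curve $\mu \mapsto [H+\mu\mI]^{-1}g$, which is exactly the classical LM curve (up to rescaling).

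Next I extract the two necessary conditions: a \emph{direction} condition and a \emph{magnitude} condition. Setting $\mu=\ltrans/a$, the direction condition forces $[H+\mu\mI]^{-1}g \parallel [H+\lambda\mI]^{-1}g$. Diagonalizing $H=\sum_i \sigma_i e_i e_i^\top$ and writing $g=\sum_i g_i e_i$ shows that the two vectors have components proportional to $g_i/(\sigma_i+\mu)$ and $g_i/(\sigma_i+\lambda)$, whose ratios depend on $i$ unless all relevant $\sigma_i$ coincide. Hence, as long as $H$ has two distinct eigenvalues $\sigma_i\neq \sigma_j$ with $g_i,g_j\neq 0$, the direction condition forces $\mu=\lambda$, i.e.\ $\ltrans=a\lambda$. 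Substituting back into the magnitude equation, the two updates agree only if
\begin{align*}
1+\tfrac{b}{a}\,g^\top\lr H+\lambda \mI\rr^{-1} g \;=\; 1,
\end{align*}
which fails whenever $b\neq 0$ and $g^\top(H+\lambda\mI)^{-1}g\neq 0$ (the latter holds automatically if $H+\lambda\mI\succ 0$ and $g\neq 0$).

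Finally, to make the non-existence concrete, I exhibit an explicit witness: pick $d=2$, $f(x_1,x_2)=\tfrac12 x_1^2+x_2^2$, evaluate at $x=(1,1)$ so that $H=\mathrm{diag}(1,2)$, $g=(1,2)^\top$ has components in two distinct eigendirections; choose any $\phi$ with $\phi''\neq 0$ (e.g.\ $\phi(t)=e^t$) and any $\lambda>0$. The two necessary conditions above cannot be satisfied simultaneously, so no function $\ltrans(x,\phi(\cdot),\lambda(x))$ reproduces the original iterate. The main obstacle is purely bookkeeping---making sure the Sherman--Morrison reduction is clean enough that the direction/magnitude split is visible---everything after that is elementary linear algebra.
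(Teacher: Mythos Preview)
Your proposal is correct, and in fact more complete than the paper's own argument. The paper simply writes out the equality
\[
(H+\lambda I)^{-1}g \;=\; \Bigl(H + \tfrac{\htx}{\gtx}\,gg^\top + \tfrac{\ltrans}{\gtx}\,I\Bigr)^{-1}g
\]
and asserts that ``for generic $(H,g)$ this identity imposes $d$ scalar equations on the single unknown $\ltrans$, so it admits no solution in general,'' without verifying that the equations are actually independent.

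Your route is genuinely different and more constructive. By applying Sherman--Morrison you show that the transformed LM step is always a \emph{scalar multiple} of a point on the classical LM curve $\mu\mapsto (H+\mu I)^{-1}g$; this lets you separate the problem into a direction constraint and a magnitude constraint. The eigendecomposition argument then makes the obstruction concrete: the direction constraint pins $\mu=\lambda$ (whenever $g$ touches two distinct eigenspaces of $H$), and the magnitude constraint then fails for any nonlinear $\phi$. What this buys you over the paper's proof is an explicit, checkable witness---the quadratic $f(x_1,x_2)=\tfrac12 x_1^2+x_2^2$ at $(1,1)$ with $\phi(t)=e^t$---rather than an appeal to genericity. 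The paper's version is shorter, but yours actually closes the logical gap.
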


Next, we turn to the geometric properties of the transformation $\phi$ and provide motivating examples.

To preserve the minimizer, we restrict $\phi$ to be monotonically increasing on the range of $f(x)$. Since $\phi$ preserves contour lines, a necessary condition for $\phi \circ f$ to be convex is that the sublevel sets of $f$ are themselves convex.

The stepsize scaling factor $1+ \frac {\ \htrans(f(x))} {\gtrans(f(x))} \normsMd {\g(x)} x$  is invariant under rescaling or translation of $\phi$: for linear transformations of the loss, the scaling factor is exactly one. A larger scaling factor implies that the transformed loss admits larger stepsizes, while a negative factor allows negative stepsizes (\Cref{th:transformation_to_stepsize}). Importantly, in all cases the sequences of iterates remain identical. Thus, if the stepsized Newton method converges on the original loss, the transformation-induced schedule guarantees convergence as well.

Having established these basic geometric properties of $\phi$, we now turn to concrete motivating examples.

\subsection*{Example: Polynomial Function} \label{ssec:example_poly}
It is well known that the classical Newton method minimizes a quadratic function in a single step. A natural question is whether this property extends to higher-order polynomials. \Cref{th:transformation_to_stepsize} suggests that the answer is indeed positive. For a positive definite matrix $\mA \succ \mathbf 0$ and an exponent $p \neq 1$, denote
\begin{align}
f(x) = \frac 1p \normM x \mA ^p,
\qquad \g(x) = \normM x \mA ^{p-2} \mA x,
\qquad \h(x) = (p-2) \normM x \mA ^{p-4} \mA x x^\top \mA + \normM x \mA ^{p-2}\mA.
\end{align}
Running Newton method on $f$ yields update
\begin{align}
\xnext &= x-\alpha \hi \g(x)\\
&=x-\alpha \ls (p-2) \normM x \mA ^{p-4} \mA x x^\top \mA + \normM x \mA ^{p-2}\mA \rs ^{-1} \normM x \mA ^{p-2}\mA x\\
&= \lr \mI - \alpha \ls (p-2) \normM x \mA ^{-2} x x^\top \mA + \mI  \rs^{-1} \rr x,\\
\intertext{and because $\ls (p-2) \normM x \mA ^{-2} x x^\top \mA + \mI  \rs x = (p-1)x$, so $\ls (p-2) \normM x \mA ^{-2} x x^\top \mA + \mI  \rs^{-1} x = \frac 1{p-1}x$ for $p\neq 1$, we can conclude}
&= \lr 1-\frac \alpha {p-1} \rr x.
\end{align}
Therefore, stepsize $\alpha = p-1$ guarantees convergence in 1 iteration.

\subsection{Example: Polytope Feasibility} \label{ssec:example_polytope}
Polytope feasibility problem searches for a point from a polytope $\Big\{ x \in \R^d| \langle a_i, x \rangle \leq b_i, \; \forall i \in \{1, \dots, n\} \Big\} $, and can be reformulated with $(t)_+ \eqdef \max\{t, 0\}$ and $p \geq 2$ as
\begin{equation}\label{eq:polytope}
    \min_{x \in \R^d }\Big\{ f_p(x) \eqdef \sum_{i=1}^{n} (\langle a_i, x \rangle - b_i)^p_+ \Big\}.
\end{equation}
\citet{hanzely2022damped} observed that for this problem the optimal fixed stepsizes for the Newton method are approximately $0.95, 1.95, 2.95, 3.95$ for $p=2,3,4,5$, respectively.  
Our theory provides a natural explanation for this phenomenon via the approximation $f_p(x) \approx \lr f_2(x) \rr^{p/2}$.
\begin{table}
    \centering
    \renewcommand{\arraystretch}{1.7}
    \begin{threeparttable}[h]{
            \caption{Comparison of loss transformations and corresponding stepsize compensation factor.}
            \label{tab:transformation_comparison}
            \centering 
            \begin{tabular}{c c c}
            \Xhline{4\arrayrulewidth}
            Loss transformation & Loss transformation formula $\phi(f_x)$ &Stepsize scaling factor \\
            \Xhline{4\arrayrulewidth}
            % \hline
             Linear & $a\cdot f_x + b$ & $1$\\
             \hline
             Polynomial  & $f_x^r$ & $1+ \frac {(r-1)}{f_x}\normsMd{\g(x)}x$ \\
             \hline
             Exponential & $\exp(a \cdot f_x)$ & $1+a \normsMd{\g(x)}x$\\
             \hline 
             Logarithmic & $\log(a+f_x)$ & $1- \frac 1{a+f_x} \normsMd {\g(x)} x$\\
             \hline
             Sigmoid & $\left(1 + \exp(-f_x)\right)^{-1}$ & $1 + \left(1 - 2\sigma(f_x)\right)\normsMd{\g(x)}x$\\
             \Xhline{4\arrayrulewidth}
            \end{tabular}
        }
    \end{threeparttable}
\end{table}

% \section{Important observation}
% \begin{enumerate}
%     \item Some transformations in \Cref{tab:transformation_comparison} (polynomial with exponent $r<1$ or logarithmic) can lead to \textbf{negative} stepsize compensation factors.
    
%     Newton is invariant to loss transformation $\pm C$, so we can make denominator as small as we like. Then the factor is $\times (negative)$ and the transformated stepsize is negative. \\ But it is still converging -- the sequence of iterates is identical to ones using the original stepsize on non-transformed function.

    % \item We can prescribe stepsizes for Newton method on a certain class of \textbf{nonconvex} functions. We need them to be convex after a transformation $\trans$. 
    
    % If they are $L_{p,\nu}$-Holder continuous after a transformation, we can use Root Newton on the transformation.
% \end{enumerate}
% \section{Experiments}
% We can start experiments with non-negative losses and polynomial transformation $\trans(f(x))=f(x)^r$. Then $\trans'(f(x))=rf(x)^{r-1}, \trans''(f(x))=r(r-1)f(x)^{r-2}$ and the Newton method with stepsize $\alpha(x)$ on $f$ is equivalent to the Newton method with stepsize $\atrans(x) =\alpha(x) \lr 1+ \frac {(r-1)\normsMd{\g(x)}x}{f(x)}\rr$ on $L=f \circ \trans$. 
% Note that for $f(x)=\frac 12 \normsM{x}\mA$ and $r=\frac p2$, it is consistent with example from \Cref{ssec:example_poly}.

\section{Transformations Improving Objective Properties}

The Newton method and its variants are well known to perform reliably on convex problems, achieving superior convergence compared to the nonconvex case. However, if a nonconvex function can be convexified through an appropriate transformation, then the guarantees of \Cref{col:induced_stepsize} can be leveraged. In this section, we analyze function classes that admit such convexifying transformations and propose several concrete examples.  
(The detailed proofs are deferred to the Appendix.)

We begin with repeating the simple observation: a monotonically increasing transformation $\phi$ preserves both level sets and sublevel sets.
\begin{claim}
    If the sublevel sets of $f$ are nonconvex, then $\phi \circ f$ is nonconvex for any monotone $\phi$.
    % If sublevel sets of $f$ are nonconvex, then $\phi \circ f$ is nonconvex for any monotone mapping $\phi$.
\end{claim}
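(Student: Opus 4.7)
The plan is to exploit the fact that monotone transformations act trivially on the level-set structure of the function, combined with the classical characterization that convex functions have convex sublevel sets. This reduces the claim to a short contrapositive argument.

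First I would recall the standard fact that a convex function $g : \R^d \to \R$ has convex sublevel sets $\{x \in \R^d : g(x) \leq c\}$ for every $c \in \R$; this is immediate from the definition of convexity applied to two points in a sublevel set. So proving nonconvexity of $\phi \circ f$ reduces to exhibiting a nonconvex sublevel set of $\phi \circ f$.

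Next I would show that every sublevel set of $\phi \circ f$ coincides with some sublevel set of $f$. Since $\phi$ is monotonically increasing on the range of $f$, for any $c$ in the range of $\phi \circ f$ we have
\begin{align}
\{x : \phi(f(x)) \leq c\} = \{x : f(x) \leq \phi^{-1}(c)\},
\end{align}
where $\phi^{-1}(c) \eqdef \sup\{t : \phi(t) \leq c\}$ to handle possible jumps if $\phi$ is merely nondecreasing. Conversely, every threshold $c' \in \R$ in the range of $f$ can be written as $\phi^{-1}(\phi(c'))$, so the family of sublevel sets of $\phi \circ f$ equals the family of sublevel sets of $f$ (up to reindexing).

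Finally, by hypothesis there exists some $c' \in \R$ with $\{x : f(x) \leq c'\}$ nonconvex. By the correspondence above, $\{x : \phi(f(x)) \leq \phi(c')\}$ is the same nonconvex set, so it is a nonconvex sublevel set of $\phi \circ f$. By the characterization in the first step, $\phi \circ f$ cannot be convex. There is no real obstacle here — the only point requiring a small amount of care is the monotonicity convention for $\phi$ (strict vs.\ non-strict increase) and ensuring the threshold $\phi(c')$ lies in the range of $\phi \circ f$, which is automatic since $c' = f(x)$ for some $x$.
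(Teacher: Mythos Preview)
Your argument is correct and follows essentially the same route as the paper: both show that a nonconvex sublevel set of $f$ yields a nonconvex sublevel set of $\phi\circ f$, then invoke the fact that convex functions have convex sublevel sets. The paper's version is marginally more concrete---it picks explicit witnesses $x,y\in\mathcal L_{f,c}$ and a midpoint $z\notin\mathcal L_{f,c}$, then checks directly via monotonicity that the same three points witness nonconvexity of a sublevel set of $\phi\circ f$---which sidesteps the need to define $\phi^{-1}$ or worry about the full correspondence of sublevel-set families; but the idea is the same.
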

While the convexity of sublevel sets is a standard property of convex functions, it is also enjoyed by pseudoconvex functions. 
\begin{claim} 
If $f$ is pseudoconvex, then its sublevels sets are convex.
\end{claim}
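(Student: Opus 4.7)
The plan is to argue by contradiction, reducing convexity of the sublevel sets to the one-variable behavior of $f$ along a segment, and then using the characterization of pseudoconvexity at an interior maximizer.

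Fix $c \in \R$ and let $S_c \eqdef \{x \in \R^d : f(x) \leq c\}$. Suppose for contradiction that $S_c$ is not convex. Then there exist $x,y \in S_c$ and $t \in (0,1)$ such that $z \eqdef t x + (1-t) y$ satisfies $f(z) > c \geq \max\{f(x), f(y)\}$. Define the scalar restriction $g(s) \eqdef f(s x + (1-s) y)$ for $s \in [0,1]$. Since $f$ is twice differentiable, $g$ is continuous on $[0,1]$, with $g(0), g(1) \leq c$ and $g(t) > c$. Hence $g$ attains its maximum on $[0,1]$ at some interior point $s^\star \in (0,1)$, and at this point $g'(s^\star) = 0$, i.e.
\begin{equation*}
\langle \nabla f(z^\star), x - y \rangle = 0, \qquad z^\star \eqdef s^\star x + (1-s^\star) y.
\end{equation*}

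The next step is to invoke pseudoconvexity. Since $f(x) \leq c < f(z^\star)$, we have $f(x) < f(z^\star)$, so by \Cref{def:pseudo},
\begin{equation*}
\langle \nabla f(z^\star), x - z^\star \rangle < 0.
\end{equation*}
But $x - z^\star = (1 - s^\star)(x - y)$ with $1 - s^\star > 0$, so the inequality above is equivalent to $\langle \nabla f(z^\star), x - y \rangle < 0$, directly contradicting $g'(s^\star) = 0$. This contradiction forces $S_c$ to be convex, and since $c$ was arbitrary, every sublevel set of $f$ is convex.

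The argument is short and the only delicate point is ensuring that the maximizer of $g$ lies strictly inside $(0,1)$ so that the first-order condition applies — this is precisely where the assumption $f(z) > c \geq f(x), f(y)$ is used. Beyond that, the proof is essentially a textbook application of the pseudoconvexity definition, with no obstacle that requires the stronger characterization in \Cref{le:pseudoconvex}.
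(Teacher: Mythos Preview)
Your proof is correct. The paper states this claim without proof, treating it as a standard fact from generalized convexity (see, e.g., \citet{cambini2009generalized} or \citet{mangasarian1975pseudo}), so there is no paper proof to compare against.

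One small remark: your detour through the interior maximizer $s^\star$ of $g$ is unnecessary. You can work directly at the original point $z = t x + (1-t) y$. Since $f(x) < f(z)$ and $f(y) < f(z)$, pseudoconvexity gives both $\langle \nabla f(z), x - z \rangle < 0$ and $\langle \nabla f(z), y - z \rangle < 0$. Writing $x - z = (1-t)(x-y)$ and $y - z = -t(x-y)$, these become $\langle \nabla f(z), x - y \rangle < 0$ and $\langle \nabla f(z), x - y \rangle > 0$, an immediate contradiction. This sidesteps the extreme-value argument and the first-order condition entirely, and uses only continuity of $f$ (not differentiability of $g$ at an interior max). Your version is perfectly valid, just slightly longer than needed.
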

Another crucial property for convexification is that the gradient should vanish only at the solution. In fact, even one-dimensional functions with vanishing gradients away from the minimizer may fail to be convexifiable.
\begin{example}
Consider function $f(x) = |1 + (x-1)^5|$. Although its sublevel sets are convex, the function cannot be convexified via any monotone transformation. Notably, $f$ is not pseudoconvex.
\end{example}
Fortunately, pseudoconvex functions avoid this pathology: their gradients do not vanish outside the solution (\Cref{le:pseudoconvex}).  
In fact, strict pseudoconvexity is sufficient to guarantee the existence of a convexifying transformation, as we elaborate next.

\subsection{Convexification}
Let us analyze the Hessian of the transformed function $\loss=f \circ \trans$ under the monotone mapping $\phi$,
\[\hloss (x) = \gtrans (f(x)) \h(x) + \htrans(f(x)) \g(x) \g(x)^\top.\]
Since $\phi'(f(x)) > 0$ by monotonicity, we can equivalently write
\[\hloss (x) \propto \h(x) + \frac {\htrans(f(x))}{\gtrans (f(x))} \g(x) \g(x)^\top,\]
and denoting $r(x)=\frac {\htrans(f(x))}{\gtrans (f(x))}$,
\begin{equation} \label{eq:Hessian_L}
\hloss (x) \propto \h(x) +r(x)\g(x) \g(x)^\top.    
\end{equation}
Thus, convexification reduces to finding a transformation $\phi$ such that $\nabla^2 L(x)$ is positive semidefinite. For any $v \in \R^d$, $v^\top \hloss(x) v\propto v^\top \h(x) v + r(x) \lr v^\top \g(x)\rr^2 $. Convexity therefore requires that $v^\top \h(x) v>0$ for all vectors perpendicular to the gradient, i.e., $v^T\nabla f(x) = 0$. This is precisely the first condition of pseudoconvexity stated in \Cref{le:pseudoconvex}. 
The next step is to analyze the admissible choices of $r(x)$. To bound $r(x)$ from below, we employ the notion of the \emph{bordered Hessian}.
\begin{definition}
We call the bordered Hessian of twice differentiable loss $f: \R^d \to \R$ the matrix
\[
B(x) \;=\;
\begin{pmatrix}
0 & \nabla f(x)^{T} \\[6pt]
\nabla f(x) & \nabla^2 f(x)
\end{pmatrix}.
\]
We denote $D_{i_1,\ldots,i_k}(x)$ the principal minor of $B(x)$ of size $k+1$, formed by rows $0,i_1,\ldots, i_k$.

Analogously, we denote $M_{i_1,\ldots,i_k}(x)$ the principal minor of size $k$ of $\nabla^2 f(x)$, formed by rows $i_1,\ldots, i_k$, with the shorthand notation $M_k(x)$ for the leading principal minor of size $k$.
\end{definition}

With the introduced notation we can describe the sufficient conditions on $r(x)$ for convexification.

\begin{theorem}[\cite{schaible1980convexifiability}]\label{th:r_formula}
If $f$ is strictly pseudoconvex, then consider
\[r(x) = \begin{cases}
\max\{0; -1/\nabla f(x)^T\nabla^2f(x)\nabla f(x) : M_n(x) < 0\} & \text{for strictly pseudoconvex $f,$}\\
\max_{i_1,\ldots, i_k}\{0; M_{i_1,\ldots,i_k}(x)/D_{i_1,\ldots,i_k}(x) : D_{i_1,\ldots,i_k} < 0\} & \text{for pseudoconvex $f$}
\end{cases} 
\]
Then, $\h(x) + r(x)\g(x)\g(x)^\top$ is positive semidefinite.
\end{theorem}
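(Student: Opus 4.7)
The plan is to show that the matrix $N(x) := \h(x) + r(x)\g(x)\g(x)^\top$ is positive semidefinite, which by the standard Sylvester-type characterization for PSD is equivalent to nonnegativity of every principal minor of $N(x)$. I would therefore fix an arbitrary index set $I = \{i_1,\ldots,i_k\}$, denote by $\h_I, \g_I$ the associated principal submatrix and subvector, and rewrite the $k \times k$ principal minor of $N(x)$ along $I$ in terms of $M_{i_1,\ldots,i_k}(x)$ and $D_{i_1,\ldots,i_k}(x)$.

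The key algebraic identity is
\begin{align*}
\det\!\bigl(\h_I + r\,\g_I \g_I^\top\bigr) \;=\; M_{i_1,\ldots,i_k}(x) - r\, D_{i_1,\ldots,i_k}(x),
\end{align*}
which I would derive in two pieces. First, the matrix determinant lemma yields $\det(\h_I + r\,\g_I\g_I^\top) = \det(\h_I) + r\,\g_I^\top \mathrm{adj}(\h_I)\, \g_I$. Second, applying the Schur complement to the bordered block that defines $D_{i_1,\ldots,i_k}(x)$ (or cofactor-expanding along its first row) gives $D_{i_1,\ldots,i_k}(x) = -\g_I^\top \mathrm{adj}(\h_I)\, \g_I$; combining the two produces the displayed equation. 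Pseudoconvexity of $f$ via Lemma~\ref{le:pseudoconvex} forces $\h(x)$ to be PSD on $\g(x)^\perp$, which is classically equivalent to $D_{i_1,\ldots,i_k}(x) \leq 0$ for all $I$, so the only nontrivial principal minors are those with $D_{i_1,\ldots,i_k}(x) < 0$; in that case the inequality $M_{i_1,\ldots,i_k}(x) - r\,D_{i_1,\ldots,i_k}(x) \geq 0$ flips upon division and yields the lower bound $r \geq M_{i_1,\ldots,i_k}(x)/D_{i_1,\ldots,i_k}(x)$. Taking the maximum of these bounds together with $0$ recovers the formula in the pseudoconvex branch.

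In the strictly pseudoconvex case, Lemma~\ref{le:pseudoconvex} strengthens to $v^\top \h(x)v > 0$ for every nonzero $v \perp \g(x)$, so $\h(x)$ has at most one nonpositive eigenvalue and its eigenvector must have a nontrivial component along $\g(x)$. Under this extra structure the infinite family of principal-minor constraints collapses onto the single full-determinant condition ($k=d$), and unwinding $M_d(x) - r D_d(x) \geq 0$ simplifies to the compact formula $r \geq -1/(\g(x)^\top \h(x)\g(x))$ whenever $M_d(x)<0$, matching the first branch of the statement.

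The main obstacle I anticipate is bookkeeping around degeneracies: the matrix determinant lemma in the form above presupposes $\det(\h_I)\neq 0$, so singular submatrices must be handled by perturbing $\h_I \leadsto \h_I + \epsilon\mI$ and passing $\epsilon \downarrow 0$, preserving the inequality in the limit. A second delicate point is that PSD (as opposed to PD) genuinely requires checking every principal minor---not merely leading ones---which is precisely why the pseudoconvex branch maximizes over all index sets rather than over a nested chain. Once these issues are dispatched, the pointwise bounds assemble into the claimed formula for $r(x)$ and $N(x)\succeq 0$ follows.
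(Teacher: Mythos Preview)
The paper does not supply its own proof of this statement: Theorem~\ref{th:r_formula} is attributed to \cite{schaible1980convexifiability} and invoked as an external result, with no argument given in the body or the appendix. There is therefore nothing in the paper to compare your sketch against directly.

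On its own merits, your bordered-minor route is the standard one, and the identity $\det(\h_I+r\,\g_I\g_I^\top)=M_I-r\,D_I$ is the right engine; your claim $D_I\le 0$ is also correct (zero-extending any $v_I\perp\g_I$ gives a full vector $v\perp\g$, so Lemma~\ref{le:pseudoconvex} applies to $v_I^\top\h_I v_I$). The genuine gap is the case $D_I=0$ in the non-strict branch, which you dismiss as ``trivial.'' When $D_I=0$ the corresponding principal minor of $N(x)$ equals $M_I$ for every $r$, and nothing you have written forces $M_I\ge 0$. The pointwise second-order condition of Lemma~\ref{le:pseudoconvex} alone does not exclude this: for instance $\g=(1,0)$ and $\h=\bigl(\begin{smallmatrix}0&1\\1&0\end{smallmatrix}\bigr)$ satisfy $v\perp\g\Rightarrow v^\top\h v=0\ge 0$, yet $D_{1,2}=0$ and $M_{1,2}=-1$, so $\h+r\g\g^\top$ is never positive semidefinite. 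Ruling out such configurations is where the global pseudoconvexity hypothesis does work beyond the local second-order test, and your sketch does not engage with it. (In the strictly pseudoconvex branch this obstruction disappears, since the block of $\h_I$ orthogonal to $\g_I$ is strictly positive definite, forcing $D_I<0$ whenever $\g_I\ne 0$; your reduction to the single full-determinant condition is then sound because $N(x)$ is positive definite on $\g^\perp$, so at most one eigenvalue can be nonpositive and $\det N(x)\ge 0$ already characterizes $N(x)\succeq 0$.)
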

\Cref{th:r_formula} establishes the existence of a sufficient $r(x)$ to convexify any pseudoconvex function $f$ at a given point $x$. However, since these formulas depend on $x$ through $\nabla f(x)$ and $\nabla^2 f(x)$, they do not directly yield a closed-form global transformation $\phi$. For a global result, we need $r$ to depend only on functional values $f(x)$.  
We address this by finding a global bound $h(f(x))$ such that
\[h(f(x)) \geq r(x),\]  
which can then be used to derive a valid transformation $\phi$, as stated below.
\begin{theorem}\label{th:convexification_general}
Let there be a global upper bound of $r(x)$ in terms of functional value, $h(f(x)) \geq r(x), \, \forall x\in \R^d$. Then for any monotonically increasing mapping $\trans: [f(\xopt), \infty) \to \R$ such that
\[\phi(y) \geq \exp\left(\int\limits_{f(x_*)}^{y}\exp\left(\int\limits_{f(x_*)}^w h(s)ds\right)dw\right)\]
makes the function $\trans \circ f$ convex.
\end{theorem}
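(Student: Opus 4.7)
The plan is to reduce the multi-dimensional convexification requirement to a one-dimensional differential inequality on $\phi$, which can then be integrated twice to produce an explicit functional form.

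First, I would apply \eqref{eq:Hessian_L} to rewrite
\[
\hloss(x) = \gtrans(f(x))\lr \h(x) + \frac{\htrans(f(x))}{\gtrans(f(x))}\g(x)\g(x)^\top\rr.
\]
Since $\phi$ is monotonically increasing on $[f(\xopt),\infty)$ we have $\gtrans(f(x)) > 0$, so positive semidefiniteness of $\hloss(x)$ is equivalent to positive semidefiniteness of the bracketed matrix. Invoking \Cref{th:r_formula} together with the hypothesis $h(f(x)) \geq r(x)$, it is sufficient to enforce the single-variable pointwise inequality
\[
\frac{\htrans(y)}{\gtrans(y)} \geq h(y), \qquad \forall y \geq f(\xopt).
\]
This decouples the condition from the geometry of $f$ entirely and converts it into an ODE-style requirement on the scalar function $\phi$.

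Next, I would integrate this inequality. Setting $\psi(y) \eqdef \gtrans(y) > 0$, the condition becomes $(\log \psi)'(y) \geq h(y)$; integrating from $f(\xopt)$ to $y$ gives
\[
\gtrans(y) \geq \gtrans(f(\xopt)) \exp\lr \int_{f(\xopt)}^{y} h(s)\,ds \rr,
\]
and a second integration lower-bounds $\phi(y)$ by the iterated exponential integral appearing in the statement. Finally, I would verify that the concrete candidate
\[
\phi(y) = \exp\lr \int_{f(\xopt)}^{y}\exp\lr\int_{f(\xopt)}^{w} h(s)\,ds\rr dw\rr
\]
is monotone increasing (its derivative is visibly positive) and that a direct computation of $\htrans/\gtrans$ yields $h(y) + \exp\lr \int_{f(\xopt)}^{y}h(s)\,ds\rr \geq h(y)$, certifying convexity of $L=\phi\circ f$ via the Schaible criterion.

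The main obstacle is the passage from the matrix-valued inequality in $x$-space to the scalar ODE in $y$-space: this step relies crucially on the upper bound $h(f(x)) \geq r(x)$ being uniform over all preimages sharing the same value $y = f(x)$, which is precisely the hypothesis of the theorem. A secondary subtlety is the rank-deficient case in which $\g(x)$ lies in the kernel of $\h(x)$; this is absorbed by the pseudoinverse convention and the maximum-over-minors construction of $r(x)$ in \Cref{th:r_formula}, so no additional argument is needed beyond invoking that result as a black box.
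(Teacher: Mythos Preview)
Your reduction of $\nabla^2 L(x)\succeq 0$ to the scalar differential inequality $\phi''(y)/\phi'(y)\geq h(y)$ via \Cref{th:r_formula}, followed by two integrations, is exactly the route the paper takes; the argument is sound through the first integration $\phi'(y)\geq \phi'(f(x_*))\exp\bigl(\int_{f(x_*)}^{y} h\bigr)$.

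The slip---which the paper's own proof shares---is in your ``second integration'': integrating $\phi'(y)\geq C\exp\bigl(\int_{f(x_*)}^{y} h\bigr)$ yields $\phi(y)\geq \phi(f(x_*))+C\int_{f(x_*)}^{y}\exp\bigl(\int_{f(x_*)}^{w} h\bigr)\,dw$, with no outer exponential. More importantly, the whole chain runs in the wrong direction for the theorem as literally stated: it shows that any $\phi$ obeying the differential inequality must lie above a certain curve, not that every monotone $\phi$ lying above that curve obeys the differential inequality. A pointwise lower bound on $\phi$ constrains neither $\phi'$ nor $\phi''$ at a given $y$, so the ``for any $\phi$ with $\phi(y)\geq\ldots$'' conclusion cannot be reached this way. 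Your step~6, however, is correct and is precisely what rescues the argument for the concrete candidate $\phi(y)=\exp\bigl(\int_{f(x_*)}^{y}\exp(\int_{f(x_*)}^{w} h)\,dw\bigr)$: the direct computation $\phi''/\phi'=h(y)+\exp\bigl(\int_{f(x_*)}^{y} h\bigr)\geq h(y)$ certifies the Schaible criterion for that specific choice. This verification goes beyond what the paper writes out explicitly and is the right way to read the result---as exhibiting a canonical convexifying $\phi$, rather than as a claim about all dominating functions.
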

In the theorem above, the bound is global, based on properties of $f$ over the entire unconstrained domain. However, if the algorithm satisfies monotone convergence, $f(x^{k+1}) \leq f(x^k)$ 
(as is typical for most second-order methods), then the iterates remain within the compact sublevel set $\mathcal{L}_{f, f(x^0)} \eqdef \{x| f(x) \leq f(x^0)\}$. In this case, properties outside the compact set are irrelevant, and the global bound $h$ can be improved by restricting it to $\mathcal{L}_{f, f(x^0)}$. For instance, the distance from the initial point to the solution, $\|x^0 - \xopt\|$, can be replaced by the diameter of this compact set.

Similarly, for Newton stepsize schedules that monotonically decrease the function value, it is sufficient to consider convexification restricted to a compact set $\Delta \eqdef \mathcal{L}_{f, f(x^0)}$.
\begin{corollary}
Let $f:\R^d \to \R$ be a pseudoconvex function (\Cref{def:pseudo}) with $r(x)$ bounded by a constant on a compact $\Delta$, $c\eqdef \argmax_{x\in \Delta} r(x) < \infty$.
Then any monotonically increasing mapping $\trans: [f(\xopt), \infty) \to \R$ such that
\[\phi(y) \geq \frac{\exp\left(c(y-f(x_*))\right)-1}{c}\]
makes the function $ \trans \circ f$ convex on the compact $\Delta$.
\end{corollary}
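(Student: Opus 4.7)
The plan is to derive this as a direct specialization of \Cref{th:convexification_general} with the constant upper bound $h(s) \equiv c$.

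First, I would verify the hypothesis of the parent theorem is satisfied on $\Delta$. Since $\g$ and $\h$ are continuous and $\Delta$ is compact, the function $r(x)$ from \Cref{th:r_formula} attains its maximum on $\Delta$, so $c = \max_{x \in \Delta} r(x)$ is finite. The constant function $h(s) \equiv c$ then satisfies $h(f(x)) \geq r(x)$ for every $x \in \Delta$ — exactly what \Cref{th:convexification_general} requires, restricted to this subdomain.

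Second, I would substitute $h(s) = c$ into the integral formula of \Cref{th:convexification_general} and simplify. The inner integral collapses to $c(w - f(\xopt))$, and then
\[
\int\limits_{f(\xopt)}^{y} \exp\!\lr c(w - f(\xopt)) \rr dw \;=\; \frac{\exp\!\lr c(y - f(\xopt)) \rr - 1}{c},
\]
which is exactly the lower bound on $\phi$ stated in the corollary. Any monotonically increasing $\phi$ dominating this expression therefore satisfies the hypothesis of \Cref{th:convexification_general}, and $\phi \circ f$ is convex on $\Delta$.

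Third, I would address the scope: \Cref{th:convexification_general} is stated globally, while the corollary asserts convexity only on the compact $\Delta$. Because convexity reduces via \eqref{eq:Hessian_L} to the pointwise condition $\h(x) + \tfrac{\htrans(f(x))}{\gtrans(f(x))} \g(x)\g(x)^\top \succeq 0$, and the bound $c \geq r(x)$ holds only on $\Delta$, the argument of the parent theorem transfers verbatim on $\Delta$ and nowhere else — which both explains and justifies the scope restriction without any additional analytic machinery.

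I expect no serious obstacle here: the computation is essentially a double-integral simplification. The one point that warrants a brief justification is the implicit move from a pointwise lower bound on $\phi$ to the derivative-ratio inequality $\htrans(f(x))/\gtrans(f(x)) \geq c$ needed to control \eqref{eq:Hessian_L}. This is already packaged into \Cref{th:convexification_general}, whose prescribed antiderivative is precisely the minimal monotone $\phi$ solving $\htrans/\gtrans = h$; substituting $h = c$ gives the explicit exponential form above, and monotone $\phi$ majorizing it inherits the required ratio bound. The corollary is then immediate.
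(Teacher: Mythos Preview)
Your proposal is correct and follows essentially the same route as the paper: take the constant bound $h\equiv c$ on $\Delta$, plug it into the double integral of \Cref{th:convexification_general}, and evaluate to obtain $(e^{c(y-f(x_*))}-1)/c$; the paper's appendix does the same computation after first re-deriving the ODE $\phi''/\phi'\ge r$ in place rather than citing the theorem. One small remark: your displayed integral silently omits the outer $\exp$ that appears in the stated formula of \Cref{th:convexification_general}---this is actually the right thing to do (the ODE integrates to the bound in the corollary without any further exponentiation, and the paper's own appendix computation drops it too), but it would be cleaner to flag the discrepancy explicitly rather than let the reader wonder where the outer $\exp$ went.
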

For practical implementations, one can simply take $L(x) = e^{c(f(x)-f_*)}$ for sufficiently large $c$, which ensures $\nabla^2f(x) + c\nabla f(x)\nabla f(x)^T$ positive semidefinite. 
is positive semidefinite. Under this exponential transformation, the stepsize multiplier is easily computed and the convergence properties from the convex setup are directly transferred to the pseudoconvex function.

\subsection{Star-Convexification}

While convexity is a powerful and widely used assumption, in some cases weaker conditions are sufficient to guarantee convergence. In particular, star-convexity is sufficient for the well-known Cubic Newton Method \citep{nesterov2006cubic}. This motivates the construction of \emph{star-convexifying transformations}.  

To derive convexity in one dimensional setup, one should compute maximum gradient norm over a subset. Without closed analytical expression, this might not be possible, since the loss landscape can be intricate, and its examining is another investigated problem. It turns out, that we can star-convexify the loss with much the closed form transformation.

\begin{theorem}\label{th:star_convex}
Let $f:\R^d \to \R$ be a strictly pseudoconvex (\Cref{def:pseudo}) function. Then $g$ defined as below is convex,
\begin{equation}
g(x) = f(x_*) + \int\limits_{0}^1 \frac{\<\nabla f(x_* + t(x-x_*)),x-x_*>}{t}dt.
\end{equation}
\end{theorem}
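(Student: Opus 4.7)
My plan is to verify the claim directly by showing $\nabla^2 g(x) \succeq 0$ for all $x$; the two critical inputs are strict pseudoconvexity of $f$ (first-order via \Cref{def:pseudo}, second-order via \Cref{le:pseudoconvex}) and the bordered-Hessian calculus from \Cref{th:r_formula}. Differentiating $g$ under the integral sign (the apparent $1/t$ singularity is removable because $\nabla f(x_*)=0$), then differentiating once more and noting that the Jacobian of $x \mapsto x_* + t(x-x_*)$ is $tI$ so the explicit $1/t$ cancels, one arrives at
\[
\nabla g(x) = \nabla f(x) + \int_0^1 \frac{\nabla f(x_* + t(x-x_*))}{t}\,dt, \qquad \nabla^2 g(x) = \nabla^2 f(x) + A(x),
\]
where $A(x) := \int_0^1 \nabla^2 f(x_* + t(x-x_*))\,dt$.

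\textbf{PSD argument.} Two facts control the right-hand side. (a) By the fundamental theorem of calculus and $\nabla f(x_*) = 0$, $A(x)(x - x_*) = \nabla f(x)$; strict pseudoconvexity applied with $y = x_*$ yields $\langle x-x_*, \nabla f(x)\rangle > 0$ for $x \ne x_*$, so $A(x)$ is strictly positive in the ray direction. (b) By \Cref{le:pseudoconvex}, $v^\top \nabla^2 f(x) v \ge 0$ for every $v \perp \nabla f(x)$, and \Cref{th:r_formula} supplies an explicit $r(x) \ge 0$ with $\nabla^2 f(x) + r(x)\nabla f(x)\nabla f(x)^\top \succeq 0$. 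My plan is to decompose an arbitrary direction $v$ in a basis aligned with $\nabla f(x)$ and the ray $x - x_*$, then combine these certificates with the identity $A(x)(x-x_*) = \nabla f(x)$ to lower-bound $v^\top[\nabla^2 f(x) + A(x)]v$.

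\textbf{Main obstacle.} This combination is the crux and, I expect, the actual obstruction: $A$ is controlled only along $x - x_*$, $\nabla^2 f$ is controlled only orthogonal to $\nabla f(x)$, and the two subspaces need not align, leaving cross-terms sign-indefinite. Moreover, the literal conclusion appears too strong---for a smoothed extension of $f(x) = x^2 - x^3/3$ on $[0, 3/2]$ (strictly pseudoconvex, monotone on each side of $x_* = 0$), one computes $g(x) = 2x^2 - x^3/2$ with $g''(x) = 4 - 3x$, which is negative on $(4/3, 3/2]$. My fallback is the weaker conclusion suggested by the section title \emph{Star-Convexification}: substituting $\tau = t\lambda$ in the defining integral gives $g(x_* + \lambda(x-x_*)) - f(x_*) = \lambda \int_0^\lambda \Phi(\tau)/\tau\,d\tau$ with $\Phi(\tau) := \langle \nabla f(x_* + \tau(x-x_*)), x-x_*\rangle$, and 1D strict pseudoconvexity of the restriction to the line through $x_*$ forces $\Phi \ge 0$ on $[0,1]$; consequently $\lambda \mapsto \lambda^{-1}\bigl(g(x_* + \lambda(x-x_*)) - g(x_*)\bigr)$ is non-decreasing on $(0,1]$, which is precisely the Nesterov--Polyak star-convex inequality with respect to $x_*$.
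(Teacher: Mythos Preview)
Your fallback argument is exactly the paper's proof: the substitution $\tau = t\lambda$ yields $g(x_* + \lambda(x-x_*)) = f(x_*) + \lambda \int_0^\lambda \Phi(\tau)/\tau\,d\tau$, and pseudoconvexity of $f$ forces $\Phi \ge 0$ on $[0,1]$, so enlarging the integration range from $[0,\lambda]$ to $[0,1]$ gives the star-convex inequality $g(x_* + \lambda(x-x_*)) \le (1-\lambda)g(x_*) + \lambda g(x)$. You are also right that the statement as written (full convexity) is too strong---the paper's own appendix lemma that carries the proof claims only \emph{star}-convexity, and your one-dimensional counterexample confirms that global convexity can indeed fail. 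Your Hessian computations $\nabla^2 g(x) = \nabla^2 f(x) + A(x)$ and $A(x)(x-x_*) = \nabla f(x)$ are correct, and you correctly diagnosed why they cannot be assembled into a global PSD certificate; the paper does not attempt that route at all.
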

This is a much simpler form, as one does not have to inspect the gradient's norm and is able to calculate the function' value explicitly. However, if the loss function is radial symmetric and pseudoconvex, this definition is correct, as it depends only on the function values and is a valid loss transformation.
\begin{corollary}
Let $g$ be defined as in Theorem \ref{th:star_convex}. If $f(x) = \psi\left(\left\|x-x_*\right\|\right)$, then 
\begin{equation}
g(x) = f(x_*) + \psi^{-1}\left(f(x)\right)\int\limits_{f(x_*)}^{f(x)}\frac{dv}{\psi^{-1}\left(v\right)}.
\end{equation}
\end{corollary}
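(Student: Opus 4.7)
The plan is a direct computation: under the radial symmetry assumption, the integrand defining $g$ reduces to a one-dimensional integral, which can then be re-parametrized via $\psi$ so that the arc length variable is traded for the function-value variable. No deep inequality or structural argument is required beyond correctly tracking the change of variables.

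First I would set $u = x-x_*$ and $r = \|u\|$ and apply the chain rule to $f(y) = \psi(\|y-x_*\|)$ to get $\nabla f(y) = \psi'(\|y-x_*\|)\,(y-x_*)/\|y-x_*\|$. Evaluating at $y = x_* + tu$ with $t>0$ yields $\nabla f(x_*+tu) = \psi'(tr)\,u/r$, so $\langle \nabla f(x_*+tu),u\rangle = r\,\psi'(tr)$. The definition of $g$ from \Cref{th:star_convex} then becomes
\begin{equation*}
g(x) \;=\; f(x_*) + \int_0^1 \frac{r\,\psi'(tr)}{t}\,dt.
\end{equation*}
A first substitution $s = tr$ (so $dt = ds/r$ and $r/t = r^2/s$) gives $g(x) - f(x_*) = r\int_0^r \psi'(s)/s\,ds$. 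A second substitution $v = \psi(s)$, $dv = \psi'(s)\,ds$, $s = \psi^{-1}(v)$, converts this to $r\int_{f(x_*)}^{f(x)} dv/\psi^{-1}(v)$. Since $r = \psi^{-1}(f(x))$, this is exactly the claimed identity.

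The inverse $\psi^{-1}$ is well-defined because strict pseudoconvexity of $f$ together with radial symmetry forces $\psi$ to be strictly increasing on $[0,\infty)$ (any local flat region would produce non-minimizing critical points of $f$, contradicting \Cref{le:pseudoconvex}). Hence the second change of variables is legitimate on $[f(x_*),\infty)$.

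The only subtle point, and thus the main obstacle I would address carefully, is integrability at $t=0$ (equivalently $s=0$), since the integrand carries a factor $1/t$. Twice differentiability of $f$ at the minimizer $x_*$ forces $\psi'(0) = 0$ and $\psi(r) = f(x_*) + \tfrac{1}{2}\psi''(0)r^2 + o(r^2)$, so $\psi'(s)/s$ is bounded near zero and the integrals $\int_0^1 \psi'(tr)/t\,dt$ and $\int_0^r \psi'(s)/s\,ds$ both converge. Once this is verified, the rest is a pair of elementary substitutions as described.
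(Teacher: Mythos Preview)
Your proof is correct and follows essentially the same route as the paper: compute $\nabla f$ for the radial function, substitute into the integral defining $g$, then perform the two changes of variables $s=tr$ and $v=\psi(s)$. You add extra rigor on integrability at $t=0$ and on the well-definedness of $\psi^{-1}$ that the paper's proof omits, but the core argument is identical.
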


\newcommand{\fx}{f(x)}
\begin{table}
    \centering
    \setlength\tabcolsep{3pt} % default value: 10pt % 6pt % old 3pt
    \renewcommand{\arraystretch}{1.7}
    \begin{threeparttable}[h]
        {\caption{Examples of radial symmetric functions of the form $f(x) = \psi(\|x-x_*\|)$ that are nonconvex, but star-convex after the transformation from \Cref{col:radial_star_convexification}, \eqref{eq:radial_L}.}
            \label{tab:radial_symmetric}
            \centering 
            \begin{tabular}{c c c c}
            \Xhline{4\arrayrulewidth}
            Loss & \makecell{Formula\\ $\psi(x)=$} & \makecell{Transformed loss\\$L(x)=$} & \makecell{Transformation \\ $\trans(\fx)=$} \\
            \Xhline{4\arrayrulewidth}
             \makecell{Geman-\\-McClure \citenum{geman1986bayesian}}& $\frac{x^2}{x^2+1}$ & $\frac{x^2}{x^2+1} + x\arctan (x)$ & $\fx + \sqrt{\frac{\fx}{1-\fx}}\arctan\left(\sqrt{\frac{\fx}{1-\fx}}\right)$\\
             Welsh \citenum{dennis1978techniques} & $1-e^{-x^2}$ & $\sqrt{\pi}x\, \textnormal{erf}(x)$ & $\sqrt{-\pi\log\left(1-\fx\right)}\textnormal{erf}\left(\sqrt{-\log\left(1-\fx\right)}\right)$ \\
             Cauchy \citenum{black1996robust}& $\log(1+x^2)$ & $2x\arctan(x)$ & $2\sqrt{e^{\fx} - 1}\arctan\left(\sqrt{e^{\fx} - 1}\right)$\\
             \hline
             \Xhline{4\arrayrulewidth}
            \end{tabular}
        }
    \end{threeparttable}
\end{table}

Since even in in one-dimensional case star-convexity does not imply convexity \citep{nesterov2006cubic}, one should carefully check the limitations of used optimizers. However, if derivatives of the initial one-dimensional function $\psi$ behave well far from the optimum, then, we might expect $g$ to be convex, and the Newton Method to converge.\\

\begin{corollary}
If $\psi''(r) + \frac{\psi'(r)}{r} \geq 0$ for $r \in [0,M]$, then, $g$ is convex on $\mathcal{N} = \left\{x~|~\|x-x_*\| \leq M \right\}$.
\end{corollary}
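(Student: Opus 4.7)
Without loss of generality take $x_* = 0$. By the preceding corollary and the substitution $v = \psi(s)$, $dv = \psi'(s)\,ds$, the function $g$ admits the purely radial representation
\[g(x) = G(\|x\|), \qquad G(r) \eqdef \psi(0) + r\int_0^r \frac{\psi'(s)}{s}\,ds.\]
Since $f$ is smooth with minimizer at $0$, one has $\psi'(0)=0$, so the integrand $\psi'(s)/s$ extends continuously to $\psi''(0)$ at $s=0$ and $G$ is well defined and $C^2$ on $[0,M]$. The overall plan is to show that $G$ is convex \emph{and} non-decreasing on $[0,M]$, and then to invoke the standard radial Hessian decomposition to lift this to convexity of $g$ on $\mathcal{N}$.

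For the first step, I would differentiate $G$ via the fundamental theorem of calculus. Writing $I(r)\eqdef\int_0^r \psi'(s)/s\,ds$, a direct computation gives
\[G'(r) = I(r) + \psi'(r), \qquad G''(r) = \frac{\psi'(r)}{r} + \psi''(r).\]
The hypothesis is thus \emph{exactly} $G''(r)\ge 0$ on $[0,M]$, giving convexity of $G$. For monotonicity, strict pseudoconvexity of $f=\psi(\|\cdot\|)$ combined with \Cref{le:pseudoconvex} forces $\psi'(s)>0$ for all $s>0$, so both $I(r)\ge 0$ and $\psi'(r)\ge 0$, whence $G'(r)\ge 0$.

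For the second step, I would invoke the standard Hessian formula for a radial $C^2$ function: at $x\neq 0$ with $r=\|x\|$ and $u=x/r$,
\[\nabla^2 g(x) = \frac{G'(r)}{r}\bigl(\mI - uu^\top\bigr) + G''(r)\,uu^\top,\]
whose eigenvalues are $G''(r)$ along the radial direction and $G'(r)/r$ with multiplicity $d-1$ along tangential directions. Both are nonnegative on $(0,M]$ by the previous step, so $\nabla^2 g(x)\succeq 0$ on $\mathcal{N}\setminus\{0\}$; continuity at the origin extends positive semidefiniteness to all of $\mathcal{N}$, and convexity of $g$ on the convex set $\mathcal{N}$ follows.

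The main obstacle is essentially book-keeping at the origin: one must verify that $\psi'(s)/s$ is integrable near $0$, that $G\in C^2([0,M])$, and that the tangential eigenvalue $G'(r)/r$ has a finite, nonnegative limit as $r\downarrow 0$. All three follow cleanly from $\psi\in C^2$ with $\psi'(0)=0$, so no serious analytic work is required beyond the derivative computation above.
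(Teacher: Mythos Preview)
Your proposal is correct and follows essentially the same approach as the paper: define the radial profile $G(r)=\psi(0)+r\int_0^r \psi'(s)/s\,ds$, compute $G''(r)=\psi''(r)+\psi'(r)/r$, and read off the two Hessian eigenvalues $G''(r)$ and $G'(r)/r$ from the standard radial decomposition. The only small difference is in establishing $G'(r)\ge 0$: you invoke pseudoconvexity to get $\psi'(s)>0$ directly, whereas the paper uses $\psi'(0)=0$ to write $G'(r)=\int_0^r\bigl(\psi'(t)/t+\psi''(t)\bigr)\,dt=\int_0^r G''(t)\,dt\ge 0$, extracting the monotonicity purely from the hypothesis itself rather than from an ambient structural assumption.
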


\begin{table}[H]
    \centering
    \setlength\tabcolsep{6pt} % default value: 10pt % 6pt % old 3pt
    \renewcommand{\arraystretch}{1.7}
    \begin{threeparttable}[h]
        {\caption{Change of the neighborhood of the convergence for the unitary stepsize Newton method for losses examples from \Cref{tab:radial_symmetric}. All loses are minimized at the origin.}
            \label{tab:star-convex-neighborhoods}
            \centering 
            \begin{tabular}{c c c c c}
            \Xhline{4\arrayrulewidth}
            Loss & \makecell{Formula\\ $\psi(x)=$} & \makecell{Original \\ convergence radius} & \makecell{Transformed loss\\$L(x)=$} & \makecell{Transformed \\ convergence radius} \\
            \Xhline{4\arrayrulewidth}
             \makecell{Geman-\\-McClure \citenum{geman1986bayesian}}& $\frac{x^2}{x^2+1}$ & $\frac 1{\sqrt 3} \approx 0.577$ & $\frac{x^2}{x^2+1} + x\arctan (x)$ & $1$\\
             Welsh \citenum{dennis1978techniques} & $1-e^{-x^2}$ & $\frac 1{\sqrt 2} \approx 0.707$ & $\sqrt{\pi}x\, \textnormal{erf}(x)$ & 1 \\
             Cauchy \citenum{black1996robust}& $\log(1+x^2)$ & $1$ & $2x\arctan(x)$ & $\infty$\\
             \hline
             \Xhline{4\arrayrulewidth}
            \end{tabular}
        }
    \end{threeparttable}
\end{table}
\section{Numerical Experiments}
% In this section, we illustrate the impact of transformations on the behavior of Newton's method.

\subsection{Reversal of the Descent Direction} \label{ssec:sign_flip}
Our theory predicts that the scaling factor can become negative for certain losses (see \Cref{tab:transformation_comparison}), leading to a negative stepsize and hence a reversal of the descent direction. We verify this phenomenon in practice.  

\Cref{fig:flip_poly} (and \Cref{fig:flip_log} in the Appendix) confirm that such sign flips indeed occur. The regions in which the stepsize factor becomes negative depend on both the loss and the chosen transformation. For polynomial transformations with very small $r$, large portions of the domain yield negative scaling factors. For logarithmic transformations, negative scaling factors appear over large regions regardless of the parameter $a$.

\begin{figure}[H] 
  \centering
  \includegraphics[width=0.95\linewidth]{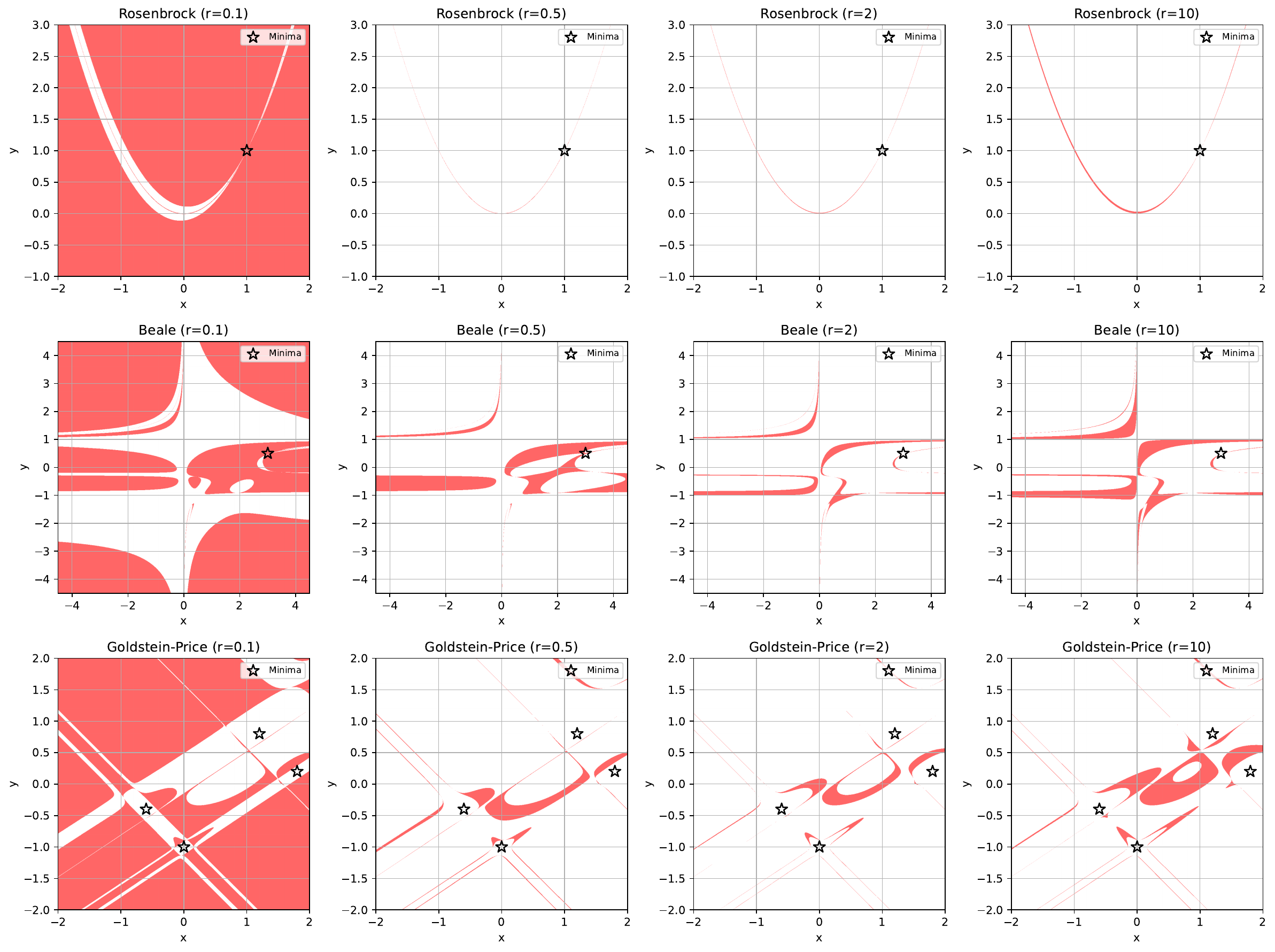}
  \caption{Regions where the Newton step changes sign after a polynomial loss transformation.}
  \label{fig:flip_poly}
\end{figure}

\subsection{Convergence Neighborhood}
Our theory also suggest that transformations can alter the convergence neighborhood of the classical Newton method. We verify this numerically:  
\Cref{fig:convergenc_neighborhood} demonstrates how polynomial transformations change the regions of convergence.
\begin{figure}[H] 
  \centering
  \includegraphics[width=0.95\linewidth]{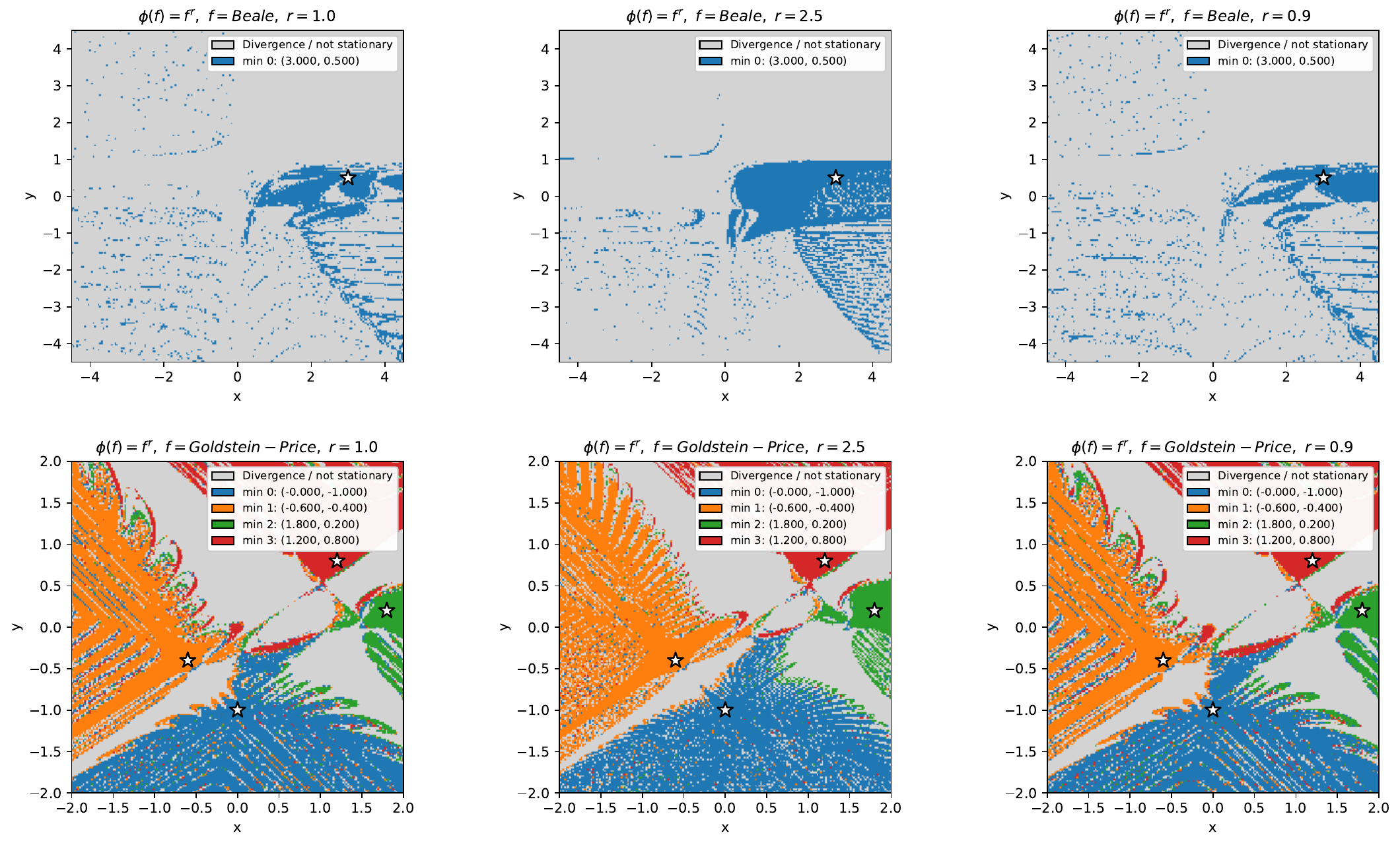}
  \caption{Convergence regions under polynomial transformations $\phi(f)=f^r$ on Beale and Goldstein--Price functions.}
  \label{fig:convergenc_neighborhood}
\end{figure}

\subsection{Recovering Convergence by Stepsize Rescheduling} \label{ssec:transformation-induced_stpesize}
We demonstrate a practical application of transformation-induced stepsizes: recovering convergence in cases where the standard Newton method diverges.  

Consider the one-dimensional function $f(x) = \ln(1+x^2)$, which has minimizer at $x^* = 0$. If the initialization satisfies $|x_0| \geq 1/\sqrt{3}$, the unit-step Newton method diverges. For example, with $x_0 = 0.8$, the iterates diverge (left subplot of \Cref{fig:newton-trajectories_1D}).  

If we instead transform $f$ into $L(x) = 2 \arctan(x)$ (as in \Cref{tab:radial_symmetric}) and apply the unit-step Newton method to $L$, the iterates converge to $x^*$ (middle subplot). Moreover, the sequence of iterates on $L$ can be transferred back to the original loss $f$ using the transformation-induced stepsize schedule from \Cref{col:induced_stepsize}, restoring convergence (right subplot).

\subsection{Benchmark Losses} \label{ssec:losses}
We evaluate our approach on three benchmark objectives, each defined as a mapping $f:\R^2 \to \R$. These functions capture different challenges in optimization: the Rosenbrock loss features narrow curved valleys, the Beale loss exhibits strong nonlinearity, and the Goldstein--Price loss is multimodal. The formulas for these objectives are given by
\begin{align}
f_{\mathrm{Rosenbrock}}(x,y) &= (1 - x)^2 + 100\,(y - x^2)^2, \\
f_{\mathrm{Beale}}(x,y) &= (1.5 - x + xy)^2 + (2.25 - x + xy^2)^2 + (2.625 - x + xy^3)^2,\\
f_{\mathrm{Goldstein\text{-}Price}}(x,y) 
&= \bigl[1 + (x+y+1)^2 \,(19 - 14x + 3x^2 - 14y + 6xy + 3y^2 )\bigr] \nonumber \\
&\quad \times \bigl[30 + (2x-3y)^2 \,(18 - 32x + 12x^2 + 48y - 36xy + 27y^2 )\bigr].
\end{align}

% \subsection*{Further technical details}
% % We are going to visualize the properties predicted by our theory. 
% Due to the space constrains, for further details we refer the reader for to \Cref{sec:experiments_description_ap}.

\bibliography{references}

\begin{thebibliography}{38}
\providecommand{\natexlab}[1]{#1}
\providecommand{\url}[1]{\texttt{#1}}
\expandafter\ifx\csname urlstyle\endcsname\relax
  \providecommand{\doi}[1]{doi: #1}\else
  \providecommand{\doi}{doi: \begingroup \urlstyle{rm}\Url}\fi

\bibitem[Agafonov et~al.(2023)Agafonov, Kamzolov, Gasnikov, Kavis, Antonakopoulos, Cevher, and Tak{\'a}{\v{c}}]{agafonov2023advancing}
Artem Agafonov, Dmitry Kamzolov, Alexander Gasnikov, Ali Kavis, Kimon Antonakopoulos, Volkan Cevher, and Martin Tak{\'a}{\v{c}}.
\newblock Advancing the lower bounds: an accelerated, stochastic, second-order method with optimal adaptation to inexactness.
\newblock \emph{arXiv preprint arXiv:2309.01570}, 2023.

\bibitem[Amari(1998)]{Amari1998NaturalGW}
Shun‐ichi Amari.
\newblock Natural gradient works efficiently in learning.
\newblock \emph{Neural Computation}, 10:\penalty0 251--276, 1998.
\newblock URL \url{https://api.semanticscholar.org/CorpusID:207585383}.

\bibitem[Avriel \& Schaible(1978)Avriel and Schaible]{avriel1978second}
Mordecai Avriel and Siegfried Schaible.
\newblock Second order characterizations of pseudoconvex functions.
\newblock \emph{Mathematical Programming}, 14\penalty0 (1):\penalty0 170--185, 1978.

\bibitem[Barron(2019)]{barron2019general}
Jonathan~T Barron.
\newblock A general and adaptive robust loss function.
\newblock In \emph{Proceedings of the IEEE/CVF conference on computer vision and pattern recognition}, pp.\  4331--4339, 2019.

\bibitem[Beyer \& Sendhoff(2007)Beyer and Sendhoff]{beyer2007robust}
Hans-Georg Beyer and Bernhard Sendhoff.
\newblock Robust optimization--a comprehensive survey.
\newblock \emph{Computer methods in applied mechanics and engineering}, 196\penalty0 (33-34):\penalty0 3190--3218, 2007.

\bibitem[Black \& Anandan(1996)Black and Anandan]{black1996robust}
Michael~J Black and Paul Anandan.
\newblock The robust estimation of multiple motions: Parametric and piecewise-smooth flow fields.
\newblock \emph{Computer vision and image understanding}, 63\penalty0 (1):\penalty0 75--104, 1996.

\bibitem[Cambini \& Martein(2009)Cambini and Martein]{cambini2009generalized}
Alberto Cambini and Laura Martein.
\newblock \emph{Generalized convexity and optimization: Theory and applications}.
\newblock Springer, 2009.

\bibitem[Conn et~al.(2000)Conn, Gould, and Toint]{conn2000trust}
Andrew Conn, Nicholas Gould, and Philippe Toint.
\newblock \emph{Trust Region Methods}.
\newblock SIAM, 2000.

\bibitem[Cooper(2021)]{cooper2021global}
Yaim Cooper.
\newblock Global minima of overparameterized neural networks.
\newblock \emph{SIAM Journal on Mathematics of Data Science}, 3\penalty0 (2):\penalty0 676--691, 2021.

\bibitem[Crouzeix(2020)]{crouzeix2020generalized}
Jean-Pierre Crouzeix.
\newblock Generalized convexity and generalized monotonicity.
\newblock In \emph{International Conference on Recent Trends in Convex Optimization: Theory, Algorithms and Applications}, pp.\  125--157. Springer, 2020.

\bibitem[Davidon(1991)]{davidon1991variable}
William~C Davidon.
\newblock Variable metric method for minimization.
\newblock \emph{SIAM Journal on optimization}, 1\penalty0 (1):\penalty0 1--17, 1991.

\bibitem[Dennis~Jr \& Welsch(1978)Dennis~Jr and Welsch]{dennis1978techniques}
John~E Dennis~Jr and Roy~E Welsch.
\newblock Techniques for nonlinear least squares and robust regression.
\newblock \emph{Communications in Statistics-simulation and Computation}, 7\penalty0 (4):\penalty0 345--359, 1978.

\bibitem[Doikov \& Nesterov(2024)Doikov and Nesterov]{doikov2024gradient}
Nikita Doikov and Yurii Nesterov.
\newblock Gradient regularization of {N}ewton method with {B}regman distances.
\newblock \emph{Mathematical programming}, 204\penalty0 (1):\penalty0 1--25, 2024.

\bibitem[Doikov et~al.(2023)Doikov, Jaggi, et~al.]{doikov2023second}
Nikita Doikov, Martin Jaggi, et~al.
\newblock Second-order optimization with lazy hessians.
\newblock In \emph{International Conference on Machine Learning}, pp.\  8138--8161. PMLR, 2023.

\bibitem[Doikov et~al.(2024)Doikov, Stich, and Jaggi]{doikov2024spectral}
Nikita Doikov, Sebastian~U Stich, and Martin Jaggi.
\newblock Spectral preconditioning for gradient methods on graded non-convex functions.
\newblock \emph{arXiv preprint arXiv:2402.04843}, 2024.

\bibitem[Fletcher(1988)]{Fletcher1988PracticalMO}
Roger Fletcher.
\newblock Practical methods of optimization.
\newblock 1988.
\newblock URL \url{https://api.semanticscholar.org/CorpusID:123487779}.

\bibitem[Frank et~al.(1956)Frank, Wolfe, et~al.]{frank1956algorithm}
Marguerite Frank, Philip Wolfe, et~al.
\newblock An algorithm for quadratic programming.
\newblock \emph{Naval research logistics quarterly}, 3\penalty0 (1-2):\penalty0 95--110, 1956.

\bibitem[Geman \& Geman(1986)Geman and Geman]{geman1986bayesian}
Donald Geman and Stuart Geman.
\newblock Bayesian image analysis.
\newblock In \emph{Disordered systems and biological organization}, pp.\  301--319. Springer, 1986.

\bibitem[Greenberg \& Pierskalla(1971)Greenberg and Pierskalla]{greenberg1971review}
Harvey~J Greenberg and William~P Pierskalla.
\newblock A review of quasi-convex functions.
\newblock \emph{Operations research}, 19\penalty0 (7):\penalty0 1553--1570, 1971.

\bibitem[Hanzely et~al.(2022)Hanzely, Kamzolov, Pasechnyuk, Gasnikov, Richt\'arik, and Tak\'a\v{c}]{hanzely2022damped}
Slavom{\'\i}r Hanzely, Dmitry Kamzolov, Dmitry Pasechnyuk, Alexander Gasnikov, Peter Richt\'arik, and Martin Tak\'a\v{c}.
\newblock A damped {N}ewton method achieves global $\mathcal{O}(k^{-2})$ and local quadratic convergence rate.
\newblock \emph{Advances in Neural Information Processing Systems}, 35:\penalty0 25320--25334, 2022.

\bibitem[Hanzely et~al.(2024)Hanzely, Abdukhakimov, and Tak{\'a}{\v{c}}]{hanzely2024newton}
Slavom{\'\i}r Hanzely, Farshed Abdukhakimov, and Martin Tak{\'a}{\v{c}}.
\newblock Newton method revisited: Global convergence rates up to $\mathcal{O}(k^{-3})$ for stepsize schedules and linesearch procedures.
\newblock \emph{arXiv preprint arXiv:2405.18926}, 2024.

\bibitem[Levenberg(1944)]{levenberg1944method}
Kenneth Levenberg.
\newblock A method for the solution of certain non-linear problems in least squares.
\newblock \emph{Quarterly of applied mathematics}, 2\penalty0 (2):\penalty0 164--168, 1944.

\bibitem[Mangasarian(1975)]{mangasarian1975pseudo}
Olvi~L Mangasarian.
\newblock Pseudo-convex functions.
\newblock In \emph{Stochastic optimization models in finance}, pp.\  23--32. Elsevier, 1975.

\bibitem[Nesterov(2008)]{nesterov2008accelerating}
Yurii Nesterov.
\newblock Accelerating the cubic regularization of {N}ewton’s method on convex problems.
\newblock \emph{Mathematical Programming}, 112\penalty0 (1):\penalty0 159--181, 2008.

\bibitem[Nesterov(2021)]{nesterov2021implementable}
Yurii Nesterov.
\newblock Implementable tensor methods in unconstrained convex optimization.
\newblock \emph{Mathematical Programming}, 186:\penalty0 157--183, 2021.

\bibitem[Nesterov \& Nemirovski(1994)Nesterov and Nemirovski]{nesterov1994interior}
Yurii Nesterov and Arkadi Nemirovski.
\newblock \emph{Interior-Point Polynomial Algorithms in Convex Programming}.
\newblock SIAM, 1994.

\bibitem[Nesterov \& Polyak(2006)Nesterov and Polyak]{nesterov2006cubic}
Yurii Nesterov and Boris Polyak.
\newblock Cubic regularization of {N}ewton method and its global performance.
\newblock \emph{Mathematical Programming}, 108\penalty0 (1):\penalty0 177--205, 2006.

\bibitem[Newton(1687)]{Newton}
Isaac Newton.
\newblock \emph{Philosophiae Naturalis Principia Mathematica}.
\newblock Jussu Societatis Regiae ac Typis Josephi Streater, 1687.

\bibitem[Nocedal \& Wright(2006)Nocedal and Wright]{nocedal2006numerical}
Jorge Nocedal and Stephen~J Wright.
\newblock \emph{Numerical optimization}.
\newblock Springer, 2006.

\bibitem[Polyak(1963)]{polyak1963gradient}
Boris~Teodorovich Polyak.
\newblock Gradient methods for minimizing functionals.
\newblock \emph{Zhurnal Vychislitel'noi Matematiki i Matematicheskoi Fiziki}, 3\penalty0 (4):\penalty0 643--653, 1963.

\bibitem[Polyak(2009)]{polyak2009regularized}
Roman Polyak.
\newblock Regularized {N}ewton method for unconstrained {C}onvex optimization.
\newblock \emph{Mathematical Programming}, 120\penalty0 (1):\penalty0 125--145, 2009.

\bibitem[Raphson(1697)]{Raphson}
Joseph Raphson.
\newblock \emph{Analysis Aequationum Universalis Seu Ad Aequationes Algebraicas Resolvendas Methodus Generalis \& Expedita, Ex Nova Infinitarum Serierum Methodo, Deducta Ac Demonstrata}.
\newblock Th. Braddyll, 1697.

\bibitem[Schaible \& Zang(1980)Schaible and Zang]{schaible1980convexifiability}
Siegfried Schaible and Israel Zang.
\newblock On the convexifiability of pseudoconvex c2-functions.
\newblock \emph{Mathematical Programming}, 19\penalty0 (1):\penalty0 289--299, 1980.

\bibitem[Schaipp et~al.(2025)Schaipp, H{\"a}gele, Taylor, Simsekli, and Bach]{schaipp2025surprising}
Fabian Schaipp, Alexander H{\"a}gele, Adrien Taylor, Umut Simsekli, and Francis Bach.
\newblock The surprising agreement between convex optimization theory and learning-rate scheduling for large model training.
\newblock \emph{arXiv preprint arXiv:2501.18965}, 2025.

\bibitem[Shea \& Schmidt(2024)Shea and Schmidt]{shea2024dont}
Betty Shea and Mark Schmidt.
\newblock Don't be so positive: Negative step sizes in second-order methods, 2024.
\newblock URL \url{https://arxiv.org/abs/2411.11224}.

\bibitem[Singh \& Alistarh(2020)Singh and Alistarh]{singh2020woodfisherefficientsecondorderapproximation}
Sidak~Pal Singh and Dan Alistarh.
\newblock Woodfisher: Efficient second-order approximation for neural network compression, 2020.
\newblock URL \url{https://arxiv.org/abs/2004.14340}.

\bibitem[Tyrrell~Rockafellar(1970)]{tyrrell1970convex}
R~Tyrrell~Rockafellar.
\newblock Convex analysis.
\newblock \emph{Princeton mathematical series}, 28, 1970.

\bibitem[Ye et~al.(2021)Ye, Lin, Zhang, and Chang]{ye2021explicit}
Haishan Ye, Dachao Lin, Zhihua Zhang, and Xiangyu Chang.
\newblock Explicit superlinear convergence rates of the sr1 algorithm.
\newblock \emph{arXiv preprint arXiv:2105.07162}, 2021.

\end{thebibliography}
\bibliographystyle{iclr2026_conference}
\newpage
\appendix
% \section{Detailed decription of the experiments} \label{sec:experiments_description_ap}

\section{Additional Experiments}
In this section we present and visualisation of the regions with the negative scaling factors for the logarithmic loss transformation in
\Cref{fig:flip_log}.
\begin{figure}[tbh] 
  \centering
  \includegraphics[width=0.95\linewidth]{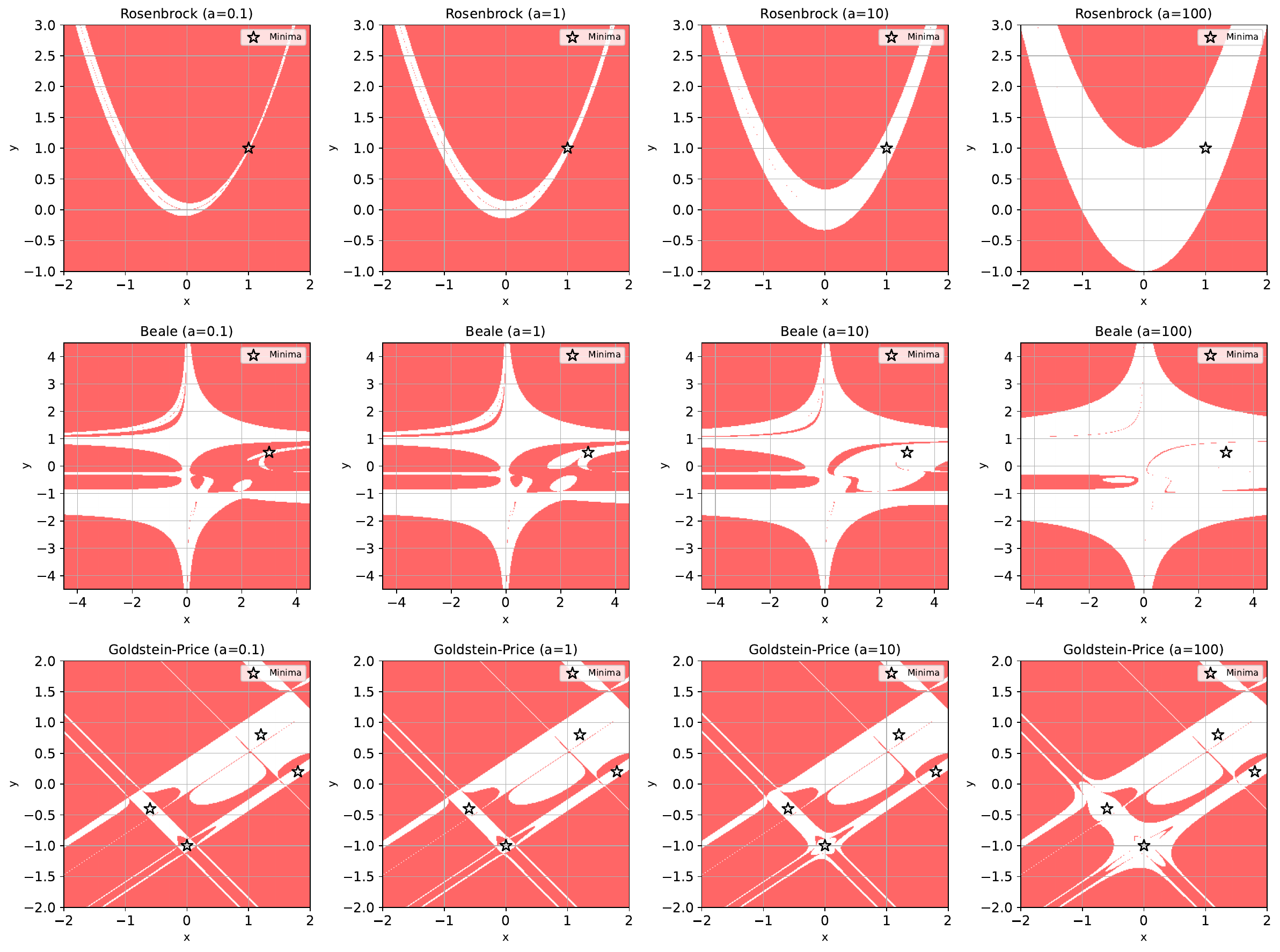}
  \caption{Regions where the Newton step changes sign after a logarithmic loss transformation.}
  \label{fig:flip_log}
\end{figure}

\section{Further Connections to the Literature}

\subsection{Newton Method Stepsize Range}
For Newton and quasi-Newton methods, virtually all explicit stepsize rules and linesearches restrict stepsizes to the range $(0,1]$. 
However, the optimality of this range has recently been challenged by \citet{shea2024dont}, who show that allowing negative stepsizes can, in many cases, be more effective than restricting to positive ones only.  

Our results provide a theoretical justification for this observation: under transformation invariance, the induced stepsize can naturally be larger than one or even negative. This offers a principled explanation for the effectiveness of such unconventional choices.  

\subsection{Newton Method}
While first-order methods often achieve satisfactory performance, incorporating second-order information can dramatically accelerate convergence. The Newton method, in particular, enjoys superlinear convergence near the optimum and performs well on convex problems, often significantly outperforming gradient descent. Nonetheless, the classical Newton method has several limitations, which have motivated numerous extensions.  

Globalization techniques include adding Hessian regularization terms \citep{nesterov2006cubic}, employing stepsize schedules \citep{polyak2009regularized,hanzely2022damped}, and using Levenberg--Marquardt regularization \citep{levenberg1944method, doikov2024gradient}. When computing the exact Hessian is prohibitively expensive, algorithms with inexact Hessians are used. Quasi-Newton methods \citep{davidon1991variable, Fletcher1988PracticalMO} approximate the Hessian action on the gradient using heuristic updates \citep{nocedal2006numerical} or rank constraints \citep{ye2021explicit}. Other approaches reuse the Hessian across multiple iterations to reduce computational cost \citep{doikov2023second}.  

Preconditioning methods can also be interpreted as inexact Hessian techniques, ranging from simple diagonal approximations \citep{singh2020woodfisherefficientsecondorderapproximation} to more sophisticated variants related to natural gradient descent \citep{Amari1998NaturalGW}. Accelerated schemes for convex problems have also been developed for convex objectives \citep{nesterov2008accelerating, agafonov2023advancing}. Although Newton-type methods are increasingly being explored in nonconvex settings \citep{doikov2024spectral}, their convergence rates and guarantees are generally weaker than in the convex case.  

\subsection{Pseudoconvex Losses}
Convex optimization \citep{tyrrell1970convex} has long served as the cornerstone of optimization theory, thanks to the absence of local minima and the availability of efficient algorithms. Classical methods include gradient-based techniques \citep{polyak1963gradient}, higher-order methods \citep{nesterov2021implementable}, and algorithms for constrained optimization \citep{frank1956algorithm}. Even today, convex optimization results continue to shed light on the learning dynamics of modern machine learning models \citep{schaipp2025surprising}.  

With the rise of deep learning, however, many practical loss landscapes are nonconvex \citep{cooper2021global}, motivating analysis under weaker assumptions. This has led to the development of generalized convexity frameworks \citep{cambini2009generalized} and the study of local properties.  

Quasiconvex functions \citep{greenberg1971review} extend convexity by requiring convex sublevel sets, enabling broader algorithmic applicability. Pseudoconvex functions \citep{mangasarian1975pseudo, crouzeix2020generalized} go further: they retain convex sublevel sets and exclude local minima, thereby preserving key advantages of convexity. \citet{avriel1978second} provided characterizations for differentiable and twice-differentiable functions in these classes.

Pseudoconvex losses have proven particularly useful in robust optimization \citep{beyer2007robust,barron2019general}. They behave like convex functions near the optimum—ensuring uniqueness of the minimizer—yet their reduced sensitivity to large deviations makes them more robust to noise. This sacrifices strict convexity while maintaining pseudoconvexity.
%========================
\section{Preservation of Newton Method's Iterations}
%========================

\subsection{Proof of Theorem \ref{th:transformation_to_stepsize}}
\label{ssec:proof_transformation_to_stepsize}

\begin{proof}[\pof{\Cref{th:transformation_to_stepsize}}]
\textbf{Notation.}
Throughout this proof let
\[
\gr \eqdef \g(x),\quad \mH \eqdef \h(x),\quad
\gtrans \eqdef \gtrans(f(x)),\quad
\htrans \eqdef \htrans(f(x)).
\]
For the transformed loss $L(x) = \phi(f(x))$ we have
\[
\gloss(x)=\gtrans\,\gr,\qquad
\hloss(x)=\htrans\,\gr\gr^\top+\gtrans\,\mH.
\]
Assume $\nabla f(x_k)\in\text{Range}(\nabla^2 f(x_k))$; abbreviate $x=x_k$ and $x_{k+1}=\xnext$.

\medskip
Starting from the transformed step and substituting $\gloss,\hloss$,
\begin{align*}
    \xnext - x
    &= -\,\atrans(x)\,\Bigl(\hloss(x)\Bigr)^{\dag}\,\gloss(x)\\
    &= -\,\atrans\,\Bigl(\htrans\,\gr\gr^\top \mH^{\dag} + \gtrans\,\mI\Bigr)^{\dag}\,\gtrans\,\mH^{\dag}\,\gr.
\end{align*}
Note that
\[
\Bigl(\htrans\,\gr\gr^\top \mH^{\dag} + \gtrans\,\mI\Bigr)\,\gr
= \bigl(\htrans\,\|\gr\|^2_{\mH^{\dag}} + \gtrans\bigr)\,\gr,
\quad\text{since}\quad (\gr\gr^\top \mH^{\dag})\gr=\gr\,(\gr^\top \mH^{\dag}\gr).
\]
Hence
\[
\Bigl(\htrans\,\gr\gr^\top \mH^{\dag} + \gtrans\,\mI\Bigr)^{\dag}\,\gr
= \frac{1}{\htrans\,\|\gr\|^2_{\mH^{\dag}} + \gtrans}\,\gr,
\]
and therefore
\begin{align*}
    \xnext - x
    &= -\,\atrans\,\frac{\gtrans}{\htrans\,\|\gr\|^{*2}_{x} + \gtrans}\,\mH^{\dag}\,\gr
     \;=\; -\,\alpha\,\mH^{\dag}\,\gr,
\end{align*}
with the scalar stepsize
\[
\alpha \;=\; \atrans\,\frac{\gtrans}{\htrans\,\|\gr\|^{*2}_{x} + \gtrans}.
\]
This is exactly the claimed form.
\end{proof}
\subsection{Proof of Lemma \ref{le:regularization_not_loss_invariant}}
It can be shown, that even with invertible matrices, Levenberg-Marquardt regularization cannot be considered transformation invariant.
\begin{proof}[\pof{\Cref{le:regularization_not_loss_invariant}}]
Assume, for contradiction, that an LM regularization on $f$ is transformation‑invariant.
Let
\[
\gr \eqdef \g(x),\quad \mH \eqdef \h(x),\quad
\gtx\eqdef \gtrans(f(x)),\quad \htx\eqdef \htrans(f(x)),\quad
\ltrans \eqdef \ltrans(x,\phi(\cdot),\lambda(x)).
\]
Suppose the (damped) Newton step satisfies
\begin{eqnarray*}
    -\bigl(x^{+}-x\bigr)
    \;&=&\; \bigl(\mH+\lambda(x)\mI\bigr)^{-1}\gr
    \;=\; \bigl(\hloss(x)+\ltrans\mI\bigr)^{-1}\gloss(x)\\
    \;&=&\; \Bigl(\htx\,\gr\gr^\top + \gtx\,\mH + \ltrans\,\mI\Bigr)^{-1}\,\gtx\,\gr.
\end{eqnarray*}
Equivalently,
\[
\bigl(\mH+\lambda(x)\mI\bigr)^{-1}\gr
= \Bigl(\mH + \tfrac{\htx}{\gtx}\,\gr\gr^\top + \tfrac{\ltrans}{\gtx}\,\mI\Bigr)^{-1}\gr.
\]
For generic $(\mH,\gr)$ this identity imposes $d$ scalar equations on the single unknown $\ltrans$ (the rank‑one term depends on $\gr$), so it admits no solution in general. Thus a scalar LM coefficient cannot be chosen to make the regularized step invariant under $\phi\circ f$.
\end{proof}
\begin{comment}
\subsection{Proof of Theorem \ref{th:transformation_regularization}}
\begin{proof}[\pof{\Cref{th:transformation_regularization}}]
% Performing a stepsized  regularization with Inexact Gauss-Newton Hessian estimates on the \textbf{transformed function} is \textbf{equivalent} to performing a stepsized Levenberg-Marquardt regularization with Inexact Gauss-Newton Hessian estimates on the \textbf{original functions} and \textbf{adjusted regularization coefficient}. 
Proof follows  direct computation
\begin{align}
x_{k+1}-x_k
&= \alpha_\trans(x_k)\ls \gloss(x_k)\gloss(x_k)^\top + \lambda_\trans(x_k) \mI \rs^{-1} \gloss(x_k)\\
&= \alpha_\trans(x_k)\ls \gtrans(f(x_k)) \g(x_k) \lr \gtrans(f(x_k)) \g(x_k)\rr^\top + \lambda_\trans(x_k) \mI \rs^{-1} \gtrans(f(x_k)) \g(x_k)\\
&= \frac{\alpha_\trans(x_k)} {\gtrans(f(x_k))} \ls \mH_k + \frac{\lambda_\trans(x_k)}{\gtrans^2(f(x_k)} \mI \rs^{-1} \g(x_k)\\
&= \alpha(x_k) \ls \mH_k + \lambda(x_k) \mI \rs^{-1} \g(x_k)
\end{align}
where $\alpha_\trans(x_k)\eqdef \alpha(x_k) \gtrans(f(x_k))$ and $\lambda_\trans(x_k) \eqdef \lambda(x_k)\gtrans^2(f(x_k)).$
% where $\alpha(x_k)=\frac{\alpha_\trans(x_k)} {\gtrans(f(x_k))}$ and $\lambda_k(x_k)=\frac{\lambda_\trans(x_k)}{\gtrans^2(f(x_k))}.$
\end{proof}
\end{comment}

% \newpage
\section{Examples of Non-Convexifiable Losses}
\begin{example}[Nonconvex sublevel sets]\label{ex:nonconvex_sublevels}
Let $\mathcal{L}_{f,c}\coloneqq\{x:\; f(x)\le c\}$.
Assume there exist $x,y\in\mathcal{L}_{f,c}$ and $t\in(0,1)$ such that $z\coloneqq tx+(1-t)y\notin \mathcal{L}_{f,c}$, i.e.,
\[
f(z)>c\ge \max\{f(x),f(y)\}.
\]
For any monotone $\phi$, we have $x,y\in\mathcal{L}_{\phi\circ f,\max\{\phi(f(x)),\phi(f(y))\}}$, while by monotonicity,
\[
z\notin\mathcal{L}_{\phi\circ f,\max\{\phi(f(x)),\phi(f(y))\}}.
\]
Hence $\phi\circ f$ has a nonconvex sublevel set and therefore cannot be convex.
\end{example}

\begin{example}[Nonconvexifiability of $\,f(x)=|1+(x-1)^5|$]
Let $x_1=1$ and $x_0\coloneqq 1-2^{1/5}$.
Then $f(x_1)=f(x_0)=1$, but $f'(x_1)=0$ while $f'(x_0)\ne 0$.
Suppose there exists a monotone $\phi$ with $\phi(0)=0$, $\phi(1)=1$ such that $L=\phi\circ f$ is convex.
For any $\varepsilon>0$, since $1=\frac{\varepsilon\cdot 0 + 1\cdot(1+\varepsilon)}{1+\varepsilon}$, convexity gives
\[
L(1)\le \frac{\varepsilon}{1+\varepsilon}L(0)+\frac{1}{1+\varepsilon}L(1+\varepsilon)
\;\;\Rightarrow\;\;
1\le \frac{L(1+\varepsilon)-L(1)}{\varepsilon}.
\]
Now $L(1+\varepsilon)=\phi\!\bigl(1+\varepsilon^5\bigr)$, hence
\[
\frac{1}{\varepsilon^4}
\;\le\;
\frac{\phi(1+\varepsilon^5)-\phi(1)}{\varepsilon^5}.
\]
Letting $\varepsilon\downarrow 0$, we obtain the right derivative $\phi'_+(1)=+\infty$.
By the chain rule where $f$ is differentiable,
$L'(x)=\phi'(f(x))\,f'(x)$, so at $x_0$ we would have $|L'(x_0)|=+\infty$ (since $f(x_0)=1$ and $f'(x_0)\ne 0$), which is impossible for a finite‑valued convex function on $\R$.
Therefore no monotone $\phi$ can convexify $f$.
\end{example}

% \newpage
\section{Missing Proofs for (Star-)Convexification}
\begin{lemma}\label{lem:star-conv}
Let $f$ be pseudoconvex with unique minimizer and twice differentiable. Then,
\[L(x) = f(x_*) + \int\limits_0^1\frac{\langle \nabla f\left(x_* + t(x-x_*)\right),x-x_*\rangle}{t}dt\]
is star-convex.
\end{lemma}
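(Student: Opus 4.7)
The plan is to reduce the claim to one dimension by restricting $f$ to arbitrary rays emanating from $x_*$. Fixing $x\in\R^d$ and setting $u\eqdef x-x_*$, I would define $\tilde f(\lambda)\eqdef f(x_*+\lambda u)$ on $\R$. Writing the definition of pseudoconvexity for two points on this line reduces immediately to the one-dimensional pseudoconvexity inequality for $\tilde f$, and the assumption that $x_*$ is the unique global minimizer of $f$ forces $\tilde f$ to have its unique global minimum at $\lambda=0$. This isolates a purely one-dimensional question along each direction $u$.

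The key intermediate statement is the sign property $s\,\tilde f'(s)\ge 0$ for all $s\in\R$, equivalently $\tilde f'(s)/s\ge 0$ for $s\neq 0$. To establish this, I would invoke the claim in the text that pseudoconvex functions have convex sublevel sets; in one dimension, this forces $\tilde f$ to be unimodal, and having its unique minimum at $0$ then forces it to be non-increasing on $(-\infty,0]$ and non-decreasing on $[0,\infty)$. Differentiating these monotonicities yields the sign condition. This is the step I expect to require the most care: unlike the convex case, we cannot argue via $\tilde f''\ge 0$, since the structure must come from the sublevel-set geometry rather than a second-derivative sign, and this is precisely the gap that pseudoconvexity (as opposed to convexity) opens up.

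With the sign property in hand, I would parameterize $L$ along the ray. A routine substitution $s=t\mu$ in the defining integral gives
\[L(x_*+\mu u)-f(x_*)=\mu\int_0^\mu\frac{\tilde f'(s)}{s}\,ds\]
for $\mu\ge 0$. Setting $\Phi_u(\mu)\eqdef L(x_*+\mu u)-f(x_*)$ and $I(\mu)\eqdef\int_0^\mu \tilde f'(s)/s\,ds$, star-convexity of $L$ around $x_*$ is exactly the inequality $\Phi_u(\mu)\le \mu\,\Phi_u(1)$, i.e.\ $\mu I(\mu)\le \mu I(1)$, for every $u\in\R^d$ and $\mu\in[0,1]$. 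Since the integrand $\tilde f'(s)/s$ is non-negative by the previous step, $I$ is non-decreasing on $[0,1]$, and the inequality is immediate; the case $\mu=0$ is trivial because $\Phi_u(0)=0$. Letting $u$ range over $\R^d$ yields the full result.

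The main obstacle is therefore the geometric step, not the calculation: all of the analytic manipulation is a single substitution followed by a monotone-integral comparison, and no information about third derivatives or full convexity of $L$ along the ray is ever needed.
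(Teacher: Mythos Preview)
Your proposal is correct and follows essentially the same route as the paper: both perform the substitution $s=t\mu$ to write $L(x_*+\mu(x-x_*))-f(x_*)=\mu\int_0^\mu \tilde f'(s)/s\,ds$, establish non-negativity of the integrand, and conclude by the monotone-integral comparison $I(\mu)\le I(1)$. The only minor differences are that the paper obtains the sign of the integrand directly from the pseudoconvexity inequality (taking $y=x_*$, so $f(x_*)<f(x_*+s u)$ yields $s\,\tilde f'(s)>0$ in one line) rather than via your detour through convex sublevel sets and unimodality, and that the paper explicitly checks convergence of the integral at $t=0$ by Taylor-expanding $\nabla f$ around $x_*$, which you omit.
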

\begin{proof}
First of all, we show, that this definition is correct, i.e. $L$ exists. For twice differentiable $f$ we have
\begin{eqnarray*}
\nabla f(x + u) = \nabla f(x) + \nabla^2 f(x)u + o\left(u\right),~~~u\rightarrow 0.
\end{eqnarray*}
Therefore, 
\begin{eqnarray*}\frac{\<\nabla f(x_* + t(x-x_*))x-x_*,>}{t} &=& \frac{t\<\nabla^2 f(x_*)(x-x_*),x-x_*> + o(t)}{t} \\
&=& \<\nabla^2 f(x_*)(x-x_*),x-x_*> + o(1),~~~ t\rightarrow 0.
\end{eqnarray*}
Hence, integral converges. Then, we shall proof the star-convexity of $L$.  For $\lambda \in [0,1]$ we have
\begin{eqnarray*}
L\left((1-\lambda)x_* + \lambda x\right) &=& L\left(x_* + \lambda(x-x_*)\right) \\
&=& f(x_*) + \int\limits_{0}^1 \frac{\<\nabla f(x_* + t\lambda(x-x_*)),\lambda (x-x_*)>}{t}dt\\
&=& f(x_*) + \lambda \int\limits_{0}^{\lambda}\frac{\<\nabla f(x_* + s(x-x_*)),(x-x_*)>}{s}ds
\end{eqnarray*}
As $f(x_* + s(x-x_*)) > f(x_*)$, we have
\begin{eqnarray*}
0 < \<\nabla f(x_* + s(x-x_*)),s(x_*-x) > = s \<\nabla f(x_* + s(x-x_*)),x_*-x>.
\end{eqnarray*}
Hence,
\begin{eqnarray*}
L\left(x_* + (1-\lambda)x\right) &=& f(x_*) + \lambda \int\limits_{0}^{\lambda}\frac{\<\nabla f(x_* + s(x-x_*)),(x-x_*)>}{s}ds\\
&\leq& f(x_*) + \lambda \int\limits_{0}^{1}\frac{\<\nabla f(x_* + s(x-x_*)),(x-x_*)>}{s}ds \\
&=& (1-\lambda)L(x_*)+ \lambda L(x).
\end{eqnarray*}
\end{proof}
% \begin{table}[h]
%     \centering
%     % \setlength\tabcolsep{6pt} % default value: 10pt % 6pt % old 3pt
%     \renewcommand{\arraystretch}{1.7}
%     \begin{threeparttable}[h]
%         {\caption{Examples of radial symmetric functions of the form $f(x) = \psi(\|x-x_*\|)$ that are nonconvex, but star-convex after the transformation from \Cref{col:radial_star_convexification}, \eqref{eq:radial_L}.}
%             \label{tab:radial_symmetric}
%             \centering 
%             \begin{tabular}{c c c c}
%             \Xhline{4\arrayrulewidth}
%             Loss & \makecell{Formula\\ $\psi(x)=$} & \makecell{Transformed loss\\$L(x)=$} & \makecell{Transformation \\ $\trans(f(x))=$} \\
%             \Xhline{4\arrayrulewidth}
%              & $\frac{x^2}{x^2+1}$ & $\frac{x^2}{x^2+1} + x\arctan (x)$ & $f(x) + \sqrt{\frac{f(x)}{1-f(x)}}\arctan\left(\sqrt{\frac{f(x)}{1-f(x)}}\right)$\\
%              & $\log(1+x^2)$ & $2x\arctan(x)$ & $2\sqrt{\exp(f(x) - 1}\arctan\left(\sqrt{\exp(f(x) - 1}\right)$\\
%              & $1-\exp(-x^2)$ & $\sqrt{\pi}x\, \textnormal{erf}(x)$ & $\sqrt{\pi}\sqrt{-\log\left(1-f(x)\right)}\textnormal{erf}\left(\sqrt{-\log\left(1-f(x)\right)}\right)$ \\
%              \hline
%              \Xhline{4\arrayrulewidth}
%             \end{tabular}
%         }
%     \end{threeparttable}
% \end{table}
\begin{corollary}\label{col:radial_star_convexification}
For radial symmetric functions $f(x) = \psi(\|x-x_*\|)$ the transformed loss 
\begin{equation}\label{eq:radial_L}
L(x) = f(x_*) + \|x-x_*\| \int\limits_{0}^{\|x-x_*\|}\frac{\psi'(t)}{t}dt = f(x_*) + \psi^{-1}(f(x))\int\limits_{f(x_*)}^{f(x)}\frac{dv}{\psi^{-1}(v)}
\end{equation}
\end{corollary}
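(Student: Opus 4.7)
The plan is to specialize the general star-convexification formula from \Cref{th:star_convex} to the radial case and then perform two successive changes of variable to obtain the two advertised forms. Throughout, let $r\eqdef\|x-x_*\|$, and note that since $f$ is pseudoconvex with minimizer $x_*$, the profile $\psi$ must be strictly increasing on $[0,\infty)$, so $\psi^{-1}$ is well defined on $[\psi(0),\infty)=[f(x_*),\infty)$.

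First, I would compute the gradient of $f$. For any $y\neq x_*$,
\[
\nabla f(y) \;=\; \psi'(\|y-x_*\|)\,\frac{y-x_*}{\|y-x_*\|}.
\]
Substituting $y=x_*+t(x-x_*)$ gives $\nabla f(y)=\psi'(tr)\,\frac{x-x_*}{r}$, hence
\[
\bigl\langle \nabla f(x_*+t(x-x_*)),\,x-x_*\bigr\rangle \;=\; r\,\psi'(tr).
\]
Plugging this into the expression from \Cref{th:star_convex},
\[
L(x) \;=\; f(x_*) + \int_{0}^{1}\frac{r\,\psi'(tr)}{t}\,dt.
\]

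Next, the change of variable $s=tr$ (with $ds=r\,dt$ and $dt/t=ds/s$) yields the first equality,
\[
L(x) \;=\; f(x_*) + r\int_{0}^{r}\frac{\psi'(s)}{s}\,ds \;=\; f(x_*) + \|x-x_*\|\int_{0}^{\|x-x_*\|}\frac{\psi'(s)}{s}\,ds.
\]
For the second equality, I would apply the substitution $v=\psi(s)$, so that $dv=\psi'(s)\,ds$ and $s=\psi^{-1}(v)$. The limits transform as $s=0\mapsto v=\psi(0)=f(x_*)$ and $s=r\mapsto v=\psi(r)=f(x)$, giving
\[
\int_{0}^{r}\frac{\psi'(s)}{s}\,ds \;=\; \int_{f(x_*)}^{f(x)}\frac{dv}{\psi^{-1}(v)}.
\]
Finally, using $r=\psi^{-1}(f(x))$, I obtain the claimed second form.

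The only subtlety, and what I expect to be the main technical point, is the behavior of the integrand near $s=0$: the factor $\psi'(s)/s$ is singular there. However, since $f$ is twice differentiable at $x_*$ and attains a minimum with $\nabla f(x_*)=0$, a Taylor expansion gives $\psi'(s)=\psi''(0)\,s+o(s)$ as $s\downarrow 0$, so $\psi'(s)/s$ is bounded near $0$ and the integrals converge. This is essentially the integrability argument already invoked at the start of the proof of \Cref{lem:star-conv}, so the justifications carry over directly.
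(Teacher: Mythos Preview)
Your proof is correct and follows essentially the same approach as the paper: compute the radial gradient, substitute into the formula from \Cref{th:star_convex}, then perform the two changes of variable $s=tr$ and $v=\psi(s)$. Your version is in fact slightly more careful than the paper's, since you explicitly justify the integrability of $\psi'(s)/s$ near $s=0$ via the Taylor expansion, whereas the paper's proof of this corollary omits that point (relying implicitly on the analogous argument in \Cref{lem:star-conv}).
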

\begin{proof}
If $f(x)  = \psi(\|x-x_*\|)$, then, $\nabla f(x) = \psi'(\|x-x_*\|)\frac{x-x_*}{\|x-x_*}$. Therefore, $L$ can be rewritten as following:
\begin{eqnarray*}
L(x) = f(x_*) + \int\limits_{0}^1 \frac{\|x-x_*\|\psi'(t\|x-x_*\|)}{t}dt = f(x_*) + \|x-x_*\|\int\limits_{0}^{\|x-x_*\|}\frac{\psi'(s)}{s}ds.
\end{eqnarray*}
If $v = \psi(s)$, then, $dv = \psi'(s)ds$. Therefore, $\frac{\psi'(s)}{s}ds = \frac{dv}{s} = \frac{dv}{\psi^{-1}(v)}$, and we can rewrite
\begin{eqnarray*}
L(x) = f(x_*) + \psi^{-1}(f(x))\int\limits_{f(x_*)}^{f(x)}\frac{dv}{\psi^{-1}(v)}.
\end{eqnarray*}
\end{proof}
\begin{lemma}
    If $\psi''(r) + \frac{\psi'(r)}{r} \geq 0$ for $r \in [0,M]$, then, $L$ is convex on $\mathcal{N} = \left\{x~|~\|x-x_*\| \leq M \right\}$.
\end{lemma}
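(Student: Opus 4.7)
The plan is to reduce the statement to a one-dimensional convexity check by exploiting the radial structure. Since $f(x) = \psi(\|x - x_*\|)$, the transformed loss from \Cref{col:radial_star_convexification} can be written as
\[
L(x) = \Psi(\|x - x_*\|), \qquad \Psi(r) \eqdef \psi(0) + r \int_0^r \frac{\psi'(t)}{t}\,dt,
\]
so $L$ is the composition of a one-variable profile $\Psi$ with the convex map $x \mapsto \|x - x_*\|$. By the standard composition rule, it suffices to prove that $\Psi$ is both convex and non-decreasing on $[0, M]$.

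Next I would differentiate $\Psi$ directly. By the product rule and the fundamental theorem of calculus,
\[
\Psi'(r) = \int_0^r \frac{\psi'(t)}{t}\,dt + r \cdot \frac{\psi'(r)}{r} = \psi'(r) + \int_0^r \frac{\psi'(t)}{t}\,dt,
\]
and differentiating again,
\[
\Psi''(r) = \psi''(r) + \frac{\psi'(r)}{r}.
\]
The hypothesis $\psi''(r) + \psi'(r)/r \geq 0$ on $[0, M]$ gives $\Psi''(r) \geq 0$ on that interval, so $\Psi$ is convex and $\Psi'$ is non-decreasing on $[0, M]$.

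It remains to show $\Psi'(r) \geq 0$, for which (since $\Psi'$ is non-decreasing) it suffices to check $\Psi'(0) \geq 0$. This is the step where I expect the main subtlety: we must use that $f$ is twice differentiable at $x_*$, which forces $\psi'(0) = 0$ (otherwise $\psi(\|x - x_*\|)$ would fail to be differentiable at the apex). Under this boundary condition, $\psi'(t)/t \to \psi''(0)$ as $t \to 0^+$, the integral $\int_0^r \psi'(t)/t\,dt$ converges and vanishes at $r = 0$, and therefore $\Psi'(0) = \psi'(0) + 0 = 0$. Combined with $\Psi'' \geq 0$, this yields $\Psi'(r) \geq 0$ for all $r \in [0, M]$.

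Finally, I would close the argument by invoking the composition rule: $\Psi$ is convex and non-decreasing on $[0, M]$ and $x \mapsto \|x - x_*\|$ is convex, so $L(x) = \Psi(\|x - x_*\|)$ is convex on $\mathcal{N}$. The main obstacle, as noted, is the boundary condition $\psi'(0) = 0$; justifying the integrability of $\psi'(t)/t$ near zero and the value $\Psi'(0) = 0$ is the only place where differentiability of $f$ at $x_*$ (inherited from the ambient assumption that $f$ is twice differentiable) is genuinely used.
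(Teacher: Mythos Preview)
Your proof is correct and follows the same skeleton as the paper: define $\Psi(r) = r\int_0^r \psi'(t)/t\,dt$, compute $\Psi''(r) = \psi''(r) + \psi'(r)/r \geq 0$ by hypothesis, and use $\psi'(0)=0$ to obtain $\Psi'(r) \geq 0$. The only difference is the wrapping: you invoke the composition rule (convex non-decreasing $\Psi$ composed with the convex norm), whereas the paper computes $\nabla^2 L$ explicitly and reads off its eigenvalues $\Psi''(r)$ and $\Psi'(r)/r$; both routes reduce to exactly the same two sign checks.
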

\begin{proof}
    Define $\Psi(r) = r\int\limits_{0}^r \frac{\psi'(t)}{t}dt$. Then, $\Psi'(r) = \int\limits_{0}^r\frac{\psi'(t)}{t}dt + \psi'(r)$ and $\Psi''(r) = \frac{\psi'(r)}{r} + \psi''(r)$.

    Then, we obtain $\nabla L(x) = \Psi'(\|x-x_*\|)\frac{x-x_*}{\|x-x_*\|}$ and $\nabla^2 L(x) = \Psi''(\|x-x_*\|)\frac{(x-x_*)(x-x_*)^T}{\|x-x_*\|^2} + \frac{\Psi'(r)}{r}\left(\mI - \frac{(x-x_*)(x-x_*)^T}{\|x-x_*\|^2}is \right)$. Therefore, there are 2 eigenvalues: $\lambda_1 = \Psi''(r)$ and $\frac{\Psi'(r)}{r}$, which we need to examine.

    $\Psi''(r) = \psi''(r) + \frac{\psi'(r)}{r} \geq 0$, as we assumed.

    Using $\psi'(0) = 0$, we obtain $\Psi'(r) = \int\limits_{0}^r\frac{\psi'(t)}{t}dt + \psi'(r) = \int\limits_{0}^r\left(\frac{\psi'(t)}{t}+\psi''(t)\right)dt \geq 0$. Therefore, we obtain the needed.
\end{proof}
% For such losses as
% $$f(x) = \frac{x^2}{x^2+1},~ \log(1+x^2),~ 1-\exp(-x^2)$$
% we have
% $$L(x) =  \frac{x^2}{x^2+1} + x\arctan (x),~ 2x\arctan(x),~\sqrt{\pi}x\textnormal{erf}(x) $$
% and written in terms of $f(x)$ we have
% \begin{eqnarray*}
% g(x) =&& f(x) + \sqrt{\frac{f(x)}{1-f(x)}}\arctan\left(\sqrt{\frac{f(x)}{1-f(x)}}\right), \\
% &&2\sqrt{\exp(f(x) - 1}\arctan\left(\sqrt{\exp(f(x) - 1}\right),\\
% &&\sqrt{\pi}\sqrt{-\log\left(1-f(x)\right)}\textnormal{erf}\left(\sqrt{-\log\left(1-f(x)\right)}\right).
% \end{eqnarray*}
For losses from \Cref{tab:radial_symmetric}, we can derive first and second derivatives,
% If we write it down as $L(x) = \phi(f(x))$, then to derive certain steps we need derivatives for $\phi$. For examples above we obtain
\[
\phi_1'(c) = 1 + \frac{1}{2 \sqrt{c (1-c)^3}} \int_0^c \sqrt{\frac{1-v}{v}} \, dv
\]

\[
\phi_1''(c) = \frac{-1 + 2c}{4 \left(c (1-c)^3\right)^{3/2}} \int_0^c \sqrt{\frac{1-v}{v}} \, dv + \frac{1}{2 \sqrt{c (1-c)^3}}
\]

\[
\phi_2'(c) = 1 + \frac{e^c}{2 \sqrt{e^c - 1}} \int_0^c \frac{dv}{\sqrt{e^v - 1}}
\]

\[
\phi_2''(c) = \frac{e^c (e^c - 2)}{4 (e^c - 1)^{3/2}} \int_0^c \frac{dv}{\sqrt{e^v - 1}} + \frac{e^c}{2 \sqrt{e^c - 1}}
\]

\[
\phi_3'(c) = 1 + \frac{1}{2 (1-c) \sqrt{-\log(1-c)}} \int_0^c \frac{dv}{\sqrt{-\log(1-v)}}
\]

\[
\phi_3''(c) = \frac{2 + \log(1-c)}{4 (1-c)^2 (-\log(1-c))^{3/2}} \int_0^c \frac{dv}{\sqrt{-\log(1-v)}} + \frac{1}{2 (1-c) \sqrt{-\log(1-c)}}
\]
\begin{lemma} 
Let $f$ be pseudoconvex and $\phi$ be monotone. Then, 
\[L(x) = \exp(\phi\left(f(x)\right)\]
will be convex for 
$$\phi(y) \geq \exp\left(\int_{f(x_*)}^y \exp\left(\int_{f(x_*)}^w r(s)ds\right)dw\right)$$
\end{lemma}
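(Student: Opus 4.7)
The plan is to reduce the convexity of $L$ to a scalar differential inequality on $\phi$ and then verify the proposed candidate by direct differentiation. First I would use \eqref{eq:Hessian_L} to write $\hloss(x)\propto \h(x) + r_L(x)\,\g(x)\g(x)^\top$, where $r_L(x)\eqdef \htrans(f(x))/\gtrans(f(x))$ is well-defined because monotonicity of $\phi$ gives $\gtrans(f(x))>0$. Hence convexity of $L$ is equivalent to $\h(x)+r_L(x)\g(x)\g(x)^\top\succeq 0$ for all $x$. Since $f$ is pseudoconvex, \Cref{th:r_formula} already supplies an explicit point-wise $r(x)$ for which $\h(x)+r(x)\g(x)\g(x)^\top\succeq 0$. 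It therefore suffices to enforce $r_L(x)\geq r(x)$ everywhere; and because $r$ is expressed purely in terms of the scalar $f(x)$, this reduces to the one-dimensional inequality $\htrans(y)/\gtrans(y)\geq r(y)$ on the range of $f$.

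Second, I would integrate this inequality. Setting $u(y)\eqdef\log\gtrans(y)$, the condition becomes $u'(y)\geq r(y)$, which integrated from $y=f(\xopt)$ yields $\gtrans(y)\geq \gtrans(f(\xopt))\exp\!\bigl(\int_{f(\xopt)}^{y} r(s)\,ds\bigr)$. A second integration then produces a value lower bound on $\phi$ of exactly the double-exponential-integral shape appearing in the lemma. The third step is a direct verification that the prescribed minimal candidate
\[
\phi_{\min}(y)\eqdef\exp\!\left(\int_{f(\xopt)}^{y}\exp\!\left(\int_{f(\xopt)}^{w} r(s)\,ds\right)dw\right)
\]
satisfies the ratio inequality with room to spare: $\phi_{\min}'(y)/\phi_{\min}(y)=\exp(\int_{f(\xopt)}^{y} r(s)\,ds)$ and therefore
\[
\frac{\phi_{\min}''(y)}{\phi_{\min}'(y)}=\exp\!\left(\int_{f(\xopt)}^{y} r(s)\,ds\right)+r(y)\geq r(y),
\]
since the exponential term is nonnegative. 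Combined with \Cref{th:r_formula}, this gives $\hloss\succeq 0$ and hence convexity of $L=\phi\circ f$.

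The main obstacle is interpreting the quantifier ``any monotone $\phi$ with $\phi(y)\geq\phi_{\min}(y)$''. A pointwise value bound alone does not control the ratio $\htrans/\gtrans$, so the argument above really shows convexity for $\phi=\phi_{\min}$. The natural reading, which I would adopt, is that the statement prescribes $\phi_{\min}$ as the minimal convexifier and that any further monotone convex increment $\psi$ may be added without destroying either monotonicity or the PSD condition, since $\nabla^2(\phi_{\min}\circ f + \psi\circ f)\succeq 0$ whenever both summands are PSD. A secondary technical point is handling boundary behavior at $y=f(\xopt)$, where $\g(\xopt)=0$ makes the rank-one correction vanish and the integrals start from zero; this requires no extra work beyond noting that the inequality $\hloss\succeq 0$ is trivial there.
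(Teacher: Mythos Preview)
Your approach matches the paper's: both reduce convexity of $L=\phi\circ f$ to the scalar inequality $\phi''(y)/\phi'(y)\geq r(y)$ and integrate twice. The paper solves the differential inequality $g'/g\geq r$ with $g=\phi'$ and normalizations $\phi(y_*)=0$, $\phi'(y_*)=1$; you instead verify the stated candidate $\phi_{\min}$ by direct differentiation, which is equivalent and arguably cleaner.

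Two points to correct. First, your claim that the $r$ from \Cref{th:r_formula} ``is expressed purely in terms of the scalar $f(x)$'' is not justified: that $r(x)$ is built from $\nabla f(x)$, $\nabla^2 f(x)$, and bordered--Hessian minors, so in general it depends on $x$, not on $f(x)$. The paper makes this an explicit hypothesis in the proof (``Assume that there is $r\geq 0$ such that $\nabla^2 f(x)+r(f(x))\nabla f(x)\nabla f(x)^\top\succeq 0$'') and later removes it via a compact--set maximum; you should state it as an assumption rather than assert it. Second, you are right that a value--only bound $\phi\geq\phi_{\min}$ does not control $\phi''/\phi'$ --- the paper's own derivation in fact only yields the derivative condition, from which the value bound follows but not conversely --- yet your proposed repair is circular: asking that $\nabla^2(\psi\circ f)\succeq 0$ for the increment $\psi$ is exactly the requirement $\psi''/\psi'\geq r$ again, so nothing is gained by splitting $\phi=\phi_{\min}+\psi$. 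The sound conclusion is that convexity holds for the specific $\phi_{\min}$ (or, equivalently, for any $\phi$ satisfying $\phi''/\phi'\geq r$); the ``$\phi\geq$'' phrasing in the statement is loose, and the paper's proof does not address it either.
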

\begin{proof}
If $\phi$ is strictly monotone, then $L$ is convex iff $\nabla^2 f(x) + \frac{\phi''(f(x))}{\phi'(f(x))}\nabla f(x)\nabla f(x)^T$ is positive (semi) definite. Assume, that there is $r(x) \geq 0$, such that $\nabla^2 f(x) + r(f(x))\nabla f(x)\nabla f(x)^T$ is positive (semi) definite. Then, with $g(y) =\phi'(y), y_* = f(x_*), \phi(y_*) = 0, \phi'(y_*) = 1$ (w.l.o.g.) we have 
\begin{eqnarray*}
g'(y) &\geq& g(y)r(y)\\
g(y) &\geq& g(y_*)\exp\left(\int_{y_*}^y r(s)ds\right)\\
\phi'(y) &\geq& \exp\left(\int_{y_*}^y r(s)ds\right)\\
\phi(y) &\geq& \exp\left(\int_{y_*}^y \exp\left(\int_{y_*}^w r(s)ds\right)dw\right).
\end{eqnarray*}
Criteria for $r$ can be found in \cite{schaible1980convexifiability}. However, frequently, $r$ depends on $x$ and not on $f(x)$.This can be fixed on a compact set: $r \eqdef \max_{x\in \Delta}r(x)$. Therefore,
\begin{eqnarray*}
\phi(y) &=& \exp\left(\int_{y_*}^y \exp\left(\int_{y_*}^w r ds\right)dw\right) = \exp\left(\int_{y_*}^y \exp (r(w-y_*))dw\right) =\\
 &=& \frac{1}{r}\left(e^{r(y-y_*)} - 1\right)
\end{eqnarray*}
is sufficient for convexification on $\Delta$
\end{proof}

% \newpage
% \section{Declaration of LLM Usage}
% We employed Large Language Models to improve the clarity and style of the text.

\end{document}

\begin{comment}
The rest of the section is structured as follows. First \Cref{th:transformation_to_stepsize} describes the stepsize schedule adjustment 
% $\alpha (x) \to \alpha (x) \frac {\ \htrans(f(x)) \normsMd {\g(x)} x + \gtrans (f(x)) } {\gtrans(f(x))}$ 
that preserves the same sequence of iterates despite the transformation. We follow that \Cref{le:regularization_not_loss_invariant} showing that an analogical result is impossible for Levenberg-Marquardt regularization. Lastly, \Cref{ssec:example_poly} demonstrates the strength of \Cref{th:transformation_to_stepsize} on a simple example. For reader's convenience, a simple proof analogous to proof of \Cref{th:transformation_to_stepsize} is included.
\end{comment}